\documentclass[11pt,letterpaper]{article}
\usepackage[T1]{fontenc}
\usepackage{lmodern,amsmath,amssymb,amsthm,mathtools,booktabs,microtype}
\usepackage[letterpaper,margin=1in]{geometry}
\usepackage[colorlinks=true,allcolors=blue,breaklinks=true]{hyperref}
\hypersetup{pdftitle={Explicit matrix hitting points for noncommutative algebraic branching programs},pdfauthor={Zhipeng Lu}}
\newtheorem{theorem}{Theorem}[section]
\newtheorem{lemma}[theorem]{Lemma}
\newtheorem{proposition}[theorem]{Proposition}
\newtheorem{corollary}[theorem]{Corollary}
\theoremstyle{definition}
\theoremstyle{remark}\newtheorem{remark}[theorem]{Remark}
\DeclareMathOperator{\spanop}{span}
\DeclareMathOperator{\tr}{tr}
\DeclareMathOperator{\im}{im}
\DeclareMathOperator{\rank}{rank}
\newcommand{\field}{\mathbb F}
\newcommand{\Z}{\mathbb Z}\newcommand{\Q}{\mathbb Q}\newcommand{\C}{\mathbb C}
\newcommand{\calM}{\mathcal M}
\newcommand{\eps}{\varepsilon}

\title{Explicit matrix hitting points for\\noncommutative algebraic branching programs}
\author{Zhipeng Lu\\[0.4em]
Shenzhen MSU--BIT University and\\
Guangdong Laboratory of Machine Perception and Intelligent Computing\\
Shenzhen 518172, Guangdong, China\\
\texttt{zhipeng.lu@hotmail.com}}
\date{}
\begin{document}
\hypersetup{pageanchor=false}
\begin{titlepage}
\maketitle
\begin{abstract}
\normalsize
We give a deterministic polynomial-resource black-box identity test for
noncommutative algebraic branching programs (ABPs), using ordinary matrix
evaluations of the output polynomial. Given an alphabet of size $m\ge1$ and an
ABP vertex bound $S\ge1$, we construct one fixed tuple of integer matrices of
order $R=(m+2)d(d+1)/2$, where $d=(m+2)S$, with entry bit length
$O(d\log(d+1))$. Every nonzero polynomial computed by an ABP with at most
$S$ vertices has a nonzero evaluation at this tuple. The construction
accesses neither the hidden ABP nor its intermediate gates. A rectangular
variant improves the order to $O(m^{3/2}S^2)$. Matrix order,
description length, and all returned entries are counted; the total
resources are polynomial. The result holds in characteristic zero and in
characteristic $p>\binom d2$.

The main ingredient is a fixed polynomial frame spanning the joint cyclic
module of every matrix tuple from every prescribed vector. A compensating
block makes a tagged encoding nilpotent of index at most three, while
preserving the original cyclic module. A differential exterior-vector
argument then gives a single triangular recurrence with polynomially many
outputs. Its shared left-linear circuit is converted into actual integer
query matrices by coefficient duality. Two-sided pairing further gives
deterministic nonadaptive reconstruction of an equivalent ABP from
polynomially many output-only matrix queries. The same frame gives
polynomial-cardinality explicit trace separators for simultaneous-conjugation
orbit closures, and a fixed nilpotent substitution for bounded-dimensional
recognizable series. The latter also tests rational formulas whose given
syntax is defined at the scalar origin. We provide complete proofs,
construction costs, and exact implementation checks.
\end{abstract}
\thispagestyle{empty}
\end{titlepage}
\setcounter{page}{1}
\hypersetup{pageanchor=true}

\section{Introduction}
A noncommutative algebraic branching program (ABP) can encode exponentially
many monomials with only a small number of states. Its coefficients,
however, are governed by a finite-dimensional linear representation.
White-box identity tests can exploit that representation; a black-box
algorithm must choose its evaluations without seeing the transition
matrices. This distinction lies behind the gap between the deterministic
polynomial-time white-box test of Raz and Shpilka \cite{RS} and the
quasipolynomial-size black-box hitting set of Forbes and Shpilka
\cite{FSPIT}.

We construct a matrix substitution that works for every ABP of a
prescribed size. Given the number of variables $m$ and an ABP vertex bound
$S$, our algorithm produces a fixed tuple of integer matrices that detects
every nonzero polynomial in the class. The tuple is constructible in polynomial time, and both its description
and the number of returned matrix entries are polynomial in $m,S$. The main idea is to
replace an input-dependent spanning set by a fixed family of polynomial
expressions. A compensating block makes an auxiliary operator nilpotent
without losing the original transitions; differential rigidity then
turns this encoding into a finite recurrence. The recurrence itself
supplies the matrices at which the black box is queried.

\subsection{The hitting point}
We use the matrix black-box model of \cite[Definition~1.4]{FSPIT}:
the algorithm supplies square matrices for the variables and receives the
matrix value of the output polynomial. An ABP is a directed acyclic graph
with a source and a sink, whose edges are labeled by affine linear forms
in $x_1,\ldots,x_m$. Its polynomial is the sum of source--sink path
products, in path order. We measure its state size by the number of
vertices, allowing constants, an empty-word coefficient, and arbitrary
acyclic graphs rather than only layered ones. Parallel edges are merged
by adding their labels, or their complete description is charged to the
input length.

\begin{theorem}[Fixed matrix hitting point]\label{thm:hitting}
Let $m,S\ge1$, and put
\begin{equation}\label{eq:parameters}
r=m+2,\qquad d=rS,\qquad R=r\frac{d(d+1)}2.
\end{equation}
A deterministic algorithm constructs matrices
$T_1,\ldots,T_m\in M_R(\Z)$, depending only on $(m,S)$, such that
\[
f\equiv0\quad\Longleftrightarrow\quad f(T_1,\ldots,T_m)=0
\]
for every polynomial $f$ computed by an ABP with at most $S$ vertices,
over every field of characteristic zero or characteristic
$p>\binom d2$. In positive characteristic the integer entries are reduced
in the field. The matrices are jointly strictly lower triangular.
They can be constructed using $O(mR^2)$ integer operations on numbers of
bit length $O(d\log(d+1))$.

Consequently, in the output-only matrix-evaluation model, deterministic
black-box ncABP identity testing uses polynomial total resources: the
query description has $mR^2$ entries and the answer has $R^2$ entries.
Over $\Q$, the returned entries and evaluation time are polynomially
bounded in $m,S$ and the bit length of the merged edge description.
\end{theorem}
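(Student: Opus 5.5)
We reduce the statement to the construction of a single triangular recurrence and then read the query matrices off that recurrence. First, convert an ABP with at most $S$ vertices into a linear representation: there are $u\in\field^{1\times S}$, $v\in\field^{S}$ and $A_0,A_1,\dots,A_m\in M_S(\field)$ (the constant parts of the edge labels giving $A_0$) with $f=u(A_0+\sum_i x_iA_i)^{*}v$. Acyclicity makes $A(x)=A_0+\sum_i x_iA_i$ strictly upper triangular in a topological order, so the series truncates at length $S$. The coefficient of a word $w$ is then a fixed multilinear expression in the $A_i$, and $f\equiv0$ holds exactly when $u$ annihilates the cyclic module $\calM=\spanop\{A_w v\}$. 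To remove the affine part and the source/sink vectors we use a tagged encoding. We adjoin tags for the constant letter and for the boundary and form a single block operator of order $d=rS$ with $r=m+2$. A compensating block is added so that the auxiliary operator becomes nilpotent of index at most three, while its cyclic module from the tagged start vector still contains a copy of $\calM$. This step invokes the compensating-block construction announced in the abstract.

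Second, we replace the input-dependent spanning set $\{A_wv\}$ by a fixed polynomial frame. Because $\calM$ lives in dimension $d$, it is spanned by words of length $<d$. The fixed frame should instead be indexed by pairs $1\le i\le j\le d$ (giving the count $d(d+1)/2$) and $r$ tags, for total order $R=r\,d(d+1)/2$. For the spanning argument we apply the differential exterior-vector idea. Consider $\omega(t)=v_0\wedge v_1\wedge\cdots$, where the $v_k$ are derivatives in a formal parameter $t$ of $\exp(tN)v$ for the nilpotent tagged operator $N$. Working in characteristic $0$ or $p>\binom d2$, a Wronskian/Vandermonde-type rigidity shows that the derivatives up to order $\binom d2$ span the same module. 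The restriction on $p$ is exactly what keeps the factorials $k!$ with $k\le\binom d2$ invertible, and it is why the entries are reduced mod $p$ only in that range. The outcome is a triangular recurrence $y_{k}=\sum_{i}\sum_{l<k} c_{k,l,i}\,x_i\,y_l$ with integer coefficients, computed by one shared left-linear circuit that is independent of the hidden ABP. Its outputs span the tagged cyclic module for every tuple.

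Third, we use coefficient duality to turn the circuit into matrices. The circuit's states index the $R$ coordinates, and we let $T_i$ be the matrix with $(k,l)$ entry $c_{k,l,i}$. These matrices are strictly lower triangular because the recurrence is triangular. By construction, $f(T_1,\dots,T_m)$ then has as its entries the pairings $u\,y_k$, up to an invertible integer change of basis. So $f(T)=0$ forces $u$ to vanish on $\calM$, which gives $f\equiv0$; the converse direction is trivial. For the resource bounds, each $c_{k,l,i}$ is a product of at most $d$ binomial-type integers bounded by $d^{O(1)}$, hence has bit length $O(d\log(d+1))$. Filling the $mR^2$ entries costs $O(mR^2)$ operations, and the bounds on the query and answer sizes follow directly.

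The main obstacle is the second step. We must show that one fixed recurrence, with a polynomial number $R$ of outputs, spans the cyclic module for \emph{all} tuples simultaneously, despite exponentially many words. The first thing to try is the nilpotency of index three. With $N^3=0$, the exponential $\exp(tN)$ is quadratic in $t$, so $\omega(t)$ is a polynomial of degree at most $2\cdot\binom d2\cdot$const. Its vanishing is then detected by derivatives up to that order, and this is what yields the triangular shape and the $\binom d2$ bound.
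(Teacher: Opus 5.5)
\paragraph{A gap in the spanning step.}
There is a genuine gap in your second step, which is where the theorem actually lives. If $N^3=0$, then $\frac{d^k}{dt^k}\exp(tN)v=N^k\exp(tN)v$ vanishes for $k\ge3$. So all your vectors $v_k$ lie in $\spanop\{v,Nv,N^2v\}\otimes\field[t]$, and $v_0\wedge v_1\wedge v_2\wedge v_3=0$ identically. You are examining the cyclic module of a \emph{single} nilpotent operator, of dimension at most three. It cannot contain a joint cyclic module $\calM(A;v)$ of dimension up to $S$.

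For the compensated star operator it is worse. With $\widehat v=(v,0,\dots,0)$ one has $K\widehat v=(0,v,\dots,v)$ and $K^2\widehat v=0$, exactly because the compensating block cancels $\sum_iA_i$. No information about the individual $A_i$ survives. So no Wronskian or Vandermonde argument applied to these derivatives can produce a frame. Likewise, invertibility of factorials is not why the condition $p>\binom d2$ appears.

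\paragraph{The missing idea, and two smaller corrections.}
You need to keep \emph{two} operators and couple them in a connection. The tags form a separate diagonalizable operator $H=\operatorname{diag}(0,I,\dots,(m+1)I)$, not part of a nilpotent one. Its block projections are polynomials in $H$, and $E_0KE_iKE_0$ acts as $A_i$, which gives $\calM(H,K;\widehat v)=\calM(A;v)^r$.

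The paper then takes the single operator $\nabla=\partial_z+K/z+H$ over $\field(z)$; its Krylov sequence $\nabla^i\widehat v$ stabilizes within $d$ steps. Lemma~\ref{lem:rigidity} shows that every $\nabla$-invariant rational subspace is constant and invariant under both $H$ and $K$. The argument uses the primitive exterior vector $w(z)$ of degree $\ell$. Nilpotence of $K$ makes the constant term of the scalar $c(z)$ in the exterior equation zero. Projecting the $z^\ell$ coefficient onto the eigenspace of $H^{[a]}$ containing $w_\ell$, the zero same-weight blocks of $K^{[a]}$ remove the cross term. This leaves $\ell w_\ell=0$.

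The $z$-coefficients $U_{k,j}$ of $z^k\nabla^k\widehat v$, $0\le j\le k<d$, then span the joint module. This is the source of the $d(d+1)/2$ count and of the triangular recurrence. In characteristic $p$, the point is that $z^i\nabla^i\widehat v$ has $z$-degree at most $i$, so $\ell\le\binom d2<p$.

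Two smaller corrections:
\begin{itemize}
\item The two extra blocks in $r=m+2$ are the center and the compensating leaf, not tags for constants and boundary.
\item The affine part must be absorbed before encoding, via $Q_0=(I-B_0)^{-1}$, $A_i=Q_0B_i$ and $\beta=Q_0e_{\rm snk}$. This is needed because $f(T)$ evaluates constants as multiples of $I$, and the duality $e_a^{\mathsf T}f(T)b'=\alpha^{\mathsf T}P_a(A)\beta$ requires $[w]f=\alpha^{\mathsf T}A_w\beta$. As you state it, without this step, ``$f\equiv0$ iff $u$ annihilates $\spanop\{A_wv\}$'' fails. A single constant source--sink edge gives $f=1$, while $uA_wv=0$ for every word in $x_1,\dots,x_m$.
\end{itemize}
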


The test constructs the tuple, evaluates $f$ once, and checks every
returned entry for zero. Over a general field, evaluation and zero tests
use exact field operations. Over $\Q$, the bit bound charges the cost of
evaluating any ABP satisfying the size promise, even though that ABP is
not available to the algorithm.

The theorem uses a star-shaped encoding, giving order
$R=O(m^3S^2)$. A balanced rectangular placement of the same data
improves the dependence on the alphabet size.

\begin{corollary}[Rectangular hitting point]\label{cor:rectangle}
Put $a=\lceil\sqrt m\rceil$, $b=\lceil m/a\rceil$,
$r_\square=a+b+1$, $d_\square=r_\square S$, and
$R_\square=r_\square d_\square(d_\square+1)/2$.
Theorem~\ref{thm:hitting} holds with $(d,R)$ replaced by
$(d_\square,R_\square)$, including the construction, entry-height,
and total-resource bounds. Its characteristic condition is
$p>\binom{d_\square}{2}$.
\end{corollary}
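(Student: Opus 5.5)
The plan is to rerun the proof of Theorem~\ref{thm:hitting} verbatim and change only the first step, the star-shaped tagged encoding that produces the parameter $r=m+2$. I will treat that proof as a pipeline. Its first stage takes the $m$ transition matrices of an ABP with at most $S$ vertices and packs them into a single auxiliary operator on a space of dimension $d=rS$. A compensating block makes this operator nilpotent of index at most three while preserving the original cyclic module. The later stages use only three properties of that operator: its dimension $d$, its nilpotency index, and the fact that each original transition $A_i$ can be recovered as a fixed block of it. Those later stages are the differential exterior-vector argument, the triangular recurrence on $r\,d(d+1)/2$ coordinates, and the coefficient-duality conversion into integer query matrices. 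So it suffices to give a tagged encoding with $r_\square=a+b+1$ blocks of size $S$ that has these same properties, and then substitute $r_\square$ for $r$ throughout.

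The star encoding uses one hub block, $m$ leaf blocks carrying $A_1,\dots,A_m$, and one compensating block, for $m+2$ blocks in all. For the rectangular version, index the variables by pairs $(j,k)\in[a]\times[b]$. This is possible because $ab\ge m$; unused pairs are padded with zero matrices. I will take $a$ ``row'' blocks $U_1,\dots,U_a$, $b$ ``column'' blocks $V_1,\dots,V_b$, and one hub or compensating block $W$. Transition $A_{(j,k)}$ is placed on the edge $U_j\to V_k$, with scalar tags separating edges that share an endpoint. The resulting operator is a block matrix of bipartite shape plus the compensating block from $W$. Paths then have length at most three ($W\to U\to V\to W$ or similar), which gives nilpotency of index at most three.

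The key check is that each $A_{(j,k)}$ is still recoverable as a polynomial expression in the encoded data. Concretely, the fixed frame must span the same joint cyclic module as the star encoding. I would do this by applying, inside the frame, the block idempotents $E_{U_j}$ and $E_{V_k}$, which are integer and fixed and therefore allowed. This gives $E_{V_k}\,\Phi\,E_{U_j}=A_{(j,k)}$ up to a tag. The frame polynomials of the theorem's proof then carry over with index sets $[a]\cup[b]\cup\{W\}$ in place of $[m]\cup\{\text{hub},\text{comp}\}$.

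With this lemma established, the remaining steps are mechanical. The recurrence has $r_\square d_\square(d_\square+1)/2=R_\square$ outputs. The differential rigidity step needs the nonvanishing of integers at most $\binom{d_\square}{2}$, which gives the characteristic condition $p>\binom{d_\square}2$. Entry heights are $O(d_\square\log(d_\square+1))$, and the construction costs $O(mR_\square^2)$ operations, as before. Since $a+b+1\le 2\sqrt m+3$, this gives $R_\square=O(m^{3/2}S^2)$.

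I expect the main obstacle to be making sure the compensating block still enforces nilpotency of index at most three when $W$ is shared by all $a+b$ blocks. It is also the step where the tags are chosen so that the bipartite edges do not create unintended cancellations in the exterior-vector argument. I would first try distinct integer tags per row and per column, using the product structure so that the Vandermonde-type argument from the theorem applies.
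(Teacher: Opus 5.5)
\textbf{The encoding is not constructed, and the sketch given would not work.} The only new ingredient in this corollary is the encoding, and you leave it unconstructed.

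Nilpotency cannot come from bounded path length. The walk $W\to U\to V\to W$ you cite is a cycle. An encoding whose block paths really had length at most three could apply only a bounded number of letters, so its cyclic module would miss $A_wv$ for long words $w$. As in the star, nilpotency must come from an algebraic cancellation, and the extra block must be a compensator rather than a hub. You flag exactly this as your open ``main obstacle'' and never supply the cancelling term.

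The paper uses $a$ center blocks and $b+1$ leaf blocks. $K$ sends every center to every leaf by $I$, leaf $q<b$ to center $c$ by $A_{cq}$, and the extra leaf $b$ to center $c$ by $-\sum_{q<b}A_{cq}$. Every block row of $L$ then sums to zero, so $LC=0$ and $K^3=0$.

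Your list of the only properties used downstream also omits the second hypothesis of Lemma~\ref{lem:rigidity}, namely $\Pi_\lambda K\Pi_\lambda=0$. The remark following that lemma shows that nilpotence alone is insufficient. In the paper the hypothesis holds for three reasons: every block gets its own tag, the $r_\square$ tags are distinct (also modulo $p$ under the characteristic bound), and $K$ only joins centers to leaves. Tags on edges, or tags shared by a row block and a column block, do not provide it.

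\textbf{The readout does not transfer verbatim.} Even with a correct encoding, ``substitute $r_\square$ for $r$ throughout'' fails at the readout. The star proof uses $\calM(H,K;\widehat v)=\calM(A;v)^r$ and reads the central component. In the rectangle, center $c$ receives only the letters of row $c$. Its component is therefore $\sum_{q<b}A_{cq}\calM(A;v)$, plus $\field v$ when $c=0$, and this can be proper. For instance, take $m=2$ (so $a=2$, $b=1$), $A_1=0$, $A_2=e_2e_1^{\mathsf T}$, and $v=e_1$. Center $0$ then sees only $\field e_1$, so central polynomials would not form a frame.

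The paper proves only that the projection onto every leaf equals $\calM(A;v)$, building words from the last letter:
\begin{itemize}
\item $E_tKE_0$ puts $v$ into each leaf;
\item $E_cKE_{a+q}$ applies $A_{cq}$ into center $c$;
\item $E_tKE_c$ copies the result back to any leaf.
\end{itemize}
The designated output is therefore leaf block $a$, whose row-one polynomial is $1$ (this retains the empty word). The degree bound becomes $\lfloor(d_\square-2)/3\rfloor$.

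\textbf{The cost bound is not ``as before''.} Each constant leaf row now has $a+2$ predecessors, so the star's row-by-row solve would lose a factor of order $\sqrt m$. The paper recovers $O(mR_\square^2)$ by forming the shared center-row sum $\sigma_{k,j}$ once per cluster. The height bound survives only because $\|C_0\|_\infty\le a+2d_\square\le3d_\square$.
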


Thus the optimized order is $O(m^{3/2}S^2)$; for a fixed alphabet,
both constructions have order quadratic in $S$.
Appendix~\ref{sec:rectangle} gives the construction and its shared-row
implementation, together with the designated frame degree bound
$\lfloor(d_\square-2)/3\rfloor$.

The state bound, rather than the number of monomials, controls the test.
For example, $(x_1+\cdots+x_m)^D$ has a $D+1$-vertex ABP and $m^D$
monomials. Noncommutative polynomial formulas are covered as well: a
formula with $s$ syntax occurrences has an $O(s)$-vertex series--parallel
ABP, obtained by parallel composition for addition and serial composition
for multiplication. This conversion justifies the size promise; the test
still queries only the formula's output. The result concerns matrix
black-box access and state-bounded computations. It does not extend to a
commutative scalar oracle or to arbitrary small arithmetic circuits,
which need not have small ABPs.

\subsection{From hidden states to a fixed query}
The proof separates two tasks: finding expressions that work for every
hidden transition system, and making their information accessible through
ordinary matrix evaluations.

\paragraph{A frame chosen before the matrices are known.}
Write $A_w=A_{i_1}\cdots A_{i_k}$ for $w=i_1\cdots i_k$, and
$A_\eps=I$. Every $S$-vertex affine ABP has a coefficient representation
\begin{equation}\label{eq:intro-realization}
[w]f=\alpha^{\mathsf T}A_w\beta
\end{equation}
on an $S$-dimensional space, including the empty-word coefficient. Hence
$f=0$ exactly when $\alpha^{\mathsf T}$ annihilates the joint cyclic module
\[
\calM(A;\beta)=\spanop_{\field}\{A_w\beta:w\in[m]^*\}.
\]
Finite-dimensionality bounds the size of a spanning set, but does not
choose its elements before $A$ and $\beta$ are known. All word vectors
of length less than $S$ span the module, but their number can grow
exponentially with $S$. We construct
polynomially many polynomials $P_a$, depending only on $m,S$, whose
values $P_a(A)\beta$ span this module for every tuple and every seed.
We call such a family a \emph{fixed frame}. The frame may be overcomplete;
its purpose is to avoid making any input-dependent choice of a basis.

\paragraph{Compensation without information loss.}
Place the hidden matrices in a star with $m+2$ blocks:
\[
K=\begin{pmatrix}0&L\\ C&0\end{pmatrix},\qquad
L=(A_1,\ldots,A_m,-\textstyle\sum_i A_i),\qquad
C=(I,\ldots,I)^{\mathsf T}.
\]
The last block compensates for all the others: $LC=0$, so $K^3=0$.
To retain the individual transitions, assign the blocks distinct tags with
$H=\operatorname{diag}(0,I,\ldots,(m+1)I)$, and let $E_i$ be the
corresponding block projections. Each $E_i$ is a fixed polynomial in
$H$, and, for $1\le i\le m$, $E_0KE_iKE_0$ acts as $A_i$ on the
central block. The compensating block cancels the central return sum;
the tags preserve access to each summand, and hence to every word in
the original matrices.

To exploit this encoding, consider the connection
$\nabla=\partial_z+K/z+H$ and the lifted seed
$\widehat\beta=(\beta,0,\ldots,0)$. The rational span of
$\widehat\beta,\nabla\widehat\beta,\nabla^2\widehat\beta,\ldots$
stabilizes within $d$ vectors. On its own, this only gives a subspace
over $\field(z)$, which may vary with $z$. The rigidity lemma shows
that for the encoded pair it is constant and invariant under both $K$
and $H$. It contains the lifted seed and lies in the scalar extension
of its joint cyclic module, so it equals that scalar extension.
The reason is a degree obstruction. A primitive polynomial exterior
vector records the putative variation. Nilpotence fixes its
constant-coefficient relation, while the zero blocks of $K$ within each
tag eigenspace force its leading coefficient $w_\ell$ to satisfy
$\ell w_\ell=0$. In characteristic zero this rules out every positive
degree; Section~\ref{sec:characteristic} uses a uniform degree bound to
obtain the stated positive-characteristic range.
Taking the coefficients of $z^k\nabla^k\widehat\beta$ now yields a
fixed triangular recurrence. Its central components are the required
$d(d+1)/2$ frame polynomials. Differentiation is used to prove their
spanning property, not as an operation performed by the algorithm.

\paragraph{Turning the frame into an evaluation.}
A spanning family alone does not tell a black box where to evaluate.
The useful additional property is that our recurrence is \emph{left-linear}:
each variable multiplication has the form $x_iP$, with $P$ a previously
computed register. Such a shared circuit has a finite-state coefficient
representation. After eliminating its constant transitions, we obtain
integer transition matrices $T_i$ and a fixed vector $b'$ with
$[w]P_a=e_a^{\mathsf T}T_wb'$, where $a$ indexes the output register.
Consequently,
\begin{equation}\label{eq:intro-duality}
e_a^{\mathsf T}f(T)b'
=\sum_w([w]f)([w]P_a)
=\alpha^{\mathsf T}P_a(A)\beta.
\end{equation}
The coefficient pairing in the middle is therefore a fixed linear
readout of the actual output matrix on the left. If $f(T)=0$, every
frame response is annihilated by $\alpha^{\mathsf T}$. Since the frame
spans the entire hidden cyclic module, all coefficients of $f$ vanish.
This proves the hitting property using only the permitted black-box query.

\subsection{Reconstruction and further consequences}
The frame retains enough information to reconstruct an equivalent ABP,
rather than only test whether its polynomial is zero. Applying it to the transposed
tuple and the output vector spans the observable row space as well as
the reachable column space. Fixed block matrix queries then recover a
finite Hankel table and its one-letter extensions.

\begin{corollary}[Nonadaptive output-only reconstruction]\label{cor:reconstruction}
For the parameters in \eqref{eq:parameters}, put $Q=d(d+1)/2$.
From $(m+1)Q^2$ fixed matrix queries of order $2R$, a deterministic
algorithm reconstructs an equivalent acyclic affine ABP with at most
$S+2$ vertices for every polynomial computed by an $S$-vertex ABP.
All query descriptions, full returned matrices, and field-operation
postprocessing have polynomial total size. The fields are those of
Theorem~\ref{thm:hitting}; over $\Q$, the bit cost is polynomial
in the promised merged edge-description length as well.
\end{corollary}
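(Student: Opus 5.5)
The plan is to run the hitting-point machinery of Theorem~\ref{thm:hitting} twice, once on each side of the hidden representation, and then to carry out a standard Hankel-matrix (Myhill--Nerode/Fliess) minimization on the data obtained. Fix an $S$-dimensional coefficient representation $[w]f=\alpha^{\mathsf T}A_w\beta$ as in \eqref{eq:intro-realization}. The fixed frame $P_1,\ldots,P_Q$ with $Q=d(d+1)/2$ has the property that $P_a(A)\beta$ span $\calM(A;\beta)$ for every tuple and seed. Applying the same frame to the transposed tuple $A^{\mathsf T}=(A_1^{\mathsf T},\ldots,A_m^{\mathsf T})$ with seed $\alpha$ gives vectors $P_b^{\dagger}(A^{\mathsf T})\alpha$, where $P^\dagger$ denotes word reversal. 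These span the observable space $\spanop\{A_w^{\mathsf T}\alpha\}$. The quantities to recover are therefore the Hankel entries
\[
h_{ba}=\alpha^{\mathsf T}P_b(A)P_a(A)\beta,\qquad h^{(i)}_{ba}=\alpha^{\mathsf T}P_b(A)A_iP_a(A)\beta,
\]
for $a,b\le Q$ and $i\le m$. That is $(m+1)Q^2$ numbers, matching the query count.

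\textbf{Step 1 (reading Hankel entries from output queries).} The recurrence is left-linear, so $[w]P_a=e_a^{\mathsf T}T_wb'$. Reversing the recurrence gives a right-linear realization $[w]P_b=c'^{\mathsf T}T'_we_b$. I then build a block-triangular order-$2R$ product automaton: an upper-left block $T'$, a lower-right block $T$, and an off-diagonal coupling that inserts either nothing or the letter $x_i$ between them. By coefficient duality as in \eqref{eq:intro-duality}, a suitable fixed entry of $f$ evaluated at this $2R$-order tuple equals $\sum_w([w]f)([w]P_bx_iP_a)$, which is $h^{(i)}_{ba}$, and similarly without $x_i$. Each $(a,b,i)$ gets its own query. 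Concretely, I would realize this by making the coupling block select the output register $a$ and the input register $b$, which yields exactly $(m+1)Q^2$ queries of order $2R$. The triangular structure guarantees that paths crossing between the blocks factor as a word accepted by $P_b$ followed by one optional letter and then a word of $P_a$. This is the step I expect to be the main obstacle. The required reindexing is: $P_b$ read right-to-left corresponds to a left-linear realization of the reversed frame, and the crossing must occur exactly once. I will handle this by making the coupling the only nonzero off-diagonal block and by reading the entry at (source of the first block, sink of the second).

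\textbf{Step 2 (minimal realization).} Let $H=(h_{ba})$. Its rank $n$ equals the dimension of the minimal realization of $f$, because the row and column frames span the observable and reachable spaces, and $H$ is the Gram pairing between them; thus $n\le S$. I choose a nonsingular $n\times n$ submatrix $H_{JI}$ by Gaussian elimination and set
\[
B_i=H_{JI}^{-1}H^{(i)}_{JI},\qquad \beta_0=H_{JI}^{-1}(h_{b0})_{b\in J},\qquad \alpha_0^{\mathsf T}=(h_{0a})_{a\in I}.
\]
Here the index $0$ refers to a frame element equal to the constant $1$, which the recurrence contains at $k=0$; if it does not, I obtain the required entries from additional columns of the same queries. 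The standard Hankel argument, together with the spanning property, shows that $\alpha_0^{\mathsf T}B_w\beta_0=[w]f$ for all words $w$.

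\textbf{Step 3 (ABP form and costs).} An $n$-dimensional representation is not yet acyclic. However, $f$ has an acyclic realization, and the minimal realization is unique up to similarity, so the joint action $B_i$ is simultaneously nilpotent in a common triangular flag; the reachable filtration by word length supplies this flag. In that basis, states $1,\ldots,n$ together with a source and a sink give an affine ABP with at most $S+2$ vertices, with edges labeled by the entries of $\sum_iB_ix_i$ and by $\alpha_0,\beta_0$, and with the empty-word coefficient placed on the direct source--sink edge. The costs are polynomial because the answers have size $(m+1)Q^2(2R)^2$ and elimination on $Q\times Q$ matrices is polynomial. Over $\Q$, the bit bounds follow from Theorem~\ref{thm:hitting}'s entry heights and from Hadamard-type bounds for Gaussian elimination.
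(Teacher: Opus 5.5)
Your plan follows the paper's route: frame data on both sides, one block upper-triangular query of order $2R$ per pair or triple, a Hankel quotient in which $P_0=1$ supplies the boundary vectors, and the word-length image flag. But Step~1, which you rightly call the crux, fails as written in two places. The first problem is that the word reversal is on the wrong side. The frame theorem applied to $(A^{\mathsf T},\alpha)$ gives $P_b(A^{\mathsf T})\alpha=(\alpha^{\mathsf T}P_b^{\mathrm{rev}}(A))^{\mathsf T}$, not $P_b^{\dagger}(A^{\mathsf T})\alpha$. Likewise, transposing the automaton does not give $[w]P_b=c'^{\mathsf T}T'_we_b$. With $T'_i=T_i^{\mathsf T}$ and $c'=b'$, the upper-left block of a word product is $T_{w_1}^{\mathsf T}\cdots T_{w_k}^{\mathsf T}=(T_{\mathrm{rev}(w)})^{\mathsf T}$, so $c'^{\mathsf T}T'_we_b=[w]P_b^{\mathrm{rev}}$. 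A word-order realization of the unreversed family, indexed at its right end, would be a different automaton. Its natural state space is the span of right quotients of the family, which in general can be much larger than the register count. So your construction does not produce $h_{ba}$ and $h^{(i)}_{ba}$ from queries of order $2R$. The repair is to accept the reversal, as the paper does: put $T_i^{\mathsf T}$ in the upper-left block and recover $\alpha^{\mathsf T}P_b^{\mathrm{rev}}(A)P_a(A)\beta$ and $\alpha^{\mathsf T}P_b^{\mathrm{rev}}(A)A_iP_a(A)\beta$. Their rows span the observable space directly by the frame theorem on $(A^{\mathsf T},\alpha)$, because $P^{\mathrm{rev}}(A)^{\mathsf T}=P(A^{\mathsf T})$. (Your unreversed rows $\alpha^{\mathsf T}P_b(A)$ would also span: apply the frame property to $(A,y)$ for any $y$ they annihilate. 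So Step~2 is not the problem; obtaining those entries is.)

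The second problem is that no off-diagonal coupling can ``insert nothing''. The upper-right block of $f(X)$ contains only terms in which the crossing occurs inside a single factor $X_{w_j}$, that is, at a letter. Whichever side you assign the crossing letter to, splittings with that side empty (including $w=\eps$) never reach this block. A readout confined to that block, such as your entry at (source of the first block, sink of the second), therefore loses them. With $P_0=1$, these terms make up the entire column $h_{b0}$ (or the row $h_{0a}$) from which you read $\beta_0$ (or $\alpha_0$). The paper uses the coupling $E_{ab}T_i$, so the crossing letter begins the suffix. It restores the empty-suffix terms through the right readout vector $\binom{E_{ab}\eta}{\eta}$ paired with $(\eta^{\mathsf T},0)$, where $\eta=b'$. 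For the letter queries, the coupling $\mathbf 1_{i=j}E_{ab}$ with right vector $\binom{0}{\eta}$ suffices. Two smaller points in Step~3. First, the hidden acyclic realization need not be minimal, so uniqueness up to similarity does not directly give nilpotence; argue instead that for $|w|\ge S$, pairing $B_w$ with observable rows and reachable columns gives coefficients $[uwv]f=0$, hence $B_w=0$. Second, drop the direct source--sink edge, since the paths through a single state already contribute $\alpha_0^{\mathsf T}\beta_0=[\eps]f$.
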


Every query is fixed before any answer; basis selection takes place only
in the postprocessing. Appendix~\ref{sec:reconstruction} gives the block
queries, including the empty-word readout, and uses standard minimal
Hankel realization \cite[Chapter~2]{BR11} to recover the ABP.
This is a deterministic polynomial-resource, nonadaptive implementation
of reconstruction from output evaluations. For comparison, the exact
learning algorithm of Forbes and Shpilka has membership and equivalence
queries implementable in randomized polynomial time or deterministic
quasipolynomial time \cite[Theorem~1.5]{FSPIT}. The broader learning
framework is developed in \cite{BBBK,KS06}; reconstruction with access
to intermediate gates appears in \cite[Theorem~3.5]{AMS}.

Two further consequences use the same frame. Over $\C$, it gives a
polynomial-cardinality explicit family of trace invariants separating
simultaneous-conjugation orbit closures. The jointly nilpotent query
matrices also test recognizable series of bounded realization dimension.
In characteristic zero, this yields a test for rational formulas whose
given syntax is defined at zero, or at a supplied valid matrix base point. The latter promise
ensures that every inverse gate is defined at the query; the algorithm
does not find the base point. The statements and proofs are in
Appendices~\ref{sec:separators}, \ref{sec:series}, and~\ref{sec:basepoint}.

\subsection{Relation to previous work}
The finite-dimensional viewpoint on noncommutative ABPs goes back to
Nisan's coefficient-rank method \cite{Nisan91}. For matrix black-box
PIT, Bogdanov and Wee gave a randomized test for degree-bounded
noncommutative polynomials \cite[Theorem~4.2]{BW}. Forbes and Shpilka's
deterministic ncABP hitting set has quasipolynomial cardinality and uses
matrices of order $D+1$ for depth $D$ \cite[Corollary~4.7]{FSPIT}.
Our queries have larger, but still polynomial, order; the entire tuple
has polynomial description length, and the returned matrix has
polynomially many entries. This is an
improvement in total explicit resources, rather than in the number of
calls alone: block-diagonal bundling combines any finite hitting set
into one query, but leaves the earlier construction with quasipolynomial
total order.

Automata-based substitutions also underlie the sparse interpolation of
Arvind, Mukhopadhyay, and Srinivasan \cite[Theorem~3.3]{AMS}.
Arvind, Joglekar, Mukhopadhyay, and Raja give randomized sparse tests
polynomial in the variable count and the logarithms of degree and
sparsity \cite[Theorem~2.1 and Corollary~2.2]{AJMR}; Lakhani and
Mukhopadhyay construct a single integer hitting point for
sparsity-bounded polynomials, allowing unrestricted degree in
characteristic zero \cite{LM26}. These results measure monomial count,
whereas our parameter is ABP state size. Lakhani and Saxena study a
different succinct model, giving randomized black-box tests for
constant-depth homogeneous circuits \cite{LS26}.

The rigidity argument is related to degree-zero arguments for invariant
subbundles and constant reductions of differential systems
\cite[Sections~5 and~6.1]{GV}; related small-coefficient criteria appear
in \cite{BG}. Here the compensated encoding enforces nilpotence and
zero same-tag blocks for every transition tuple. Bhardwaj and
Saptharishi also use a formal derivation and a Wronskian argument to
construct scalar hitting sets for commutative polynomials with small
partial derivative spaces \cite{BS26}. That parameter differs from
state size: $x_1\cdots x_t$ has a $2^t$-dimensional commutative partial
derivative space but a $(t+1)$-vertex noncommutative ABP.

The invariant-theoretic consequence builds on classical trace invariants
\cite{Procesi,Razmyslov} and the connection between explicit Noether
normalization and PIT \cite{Mulmuley,FSInv}. It replaces a
quasipolynomial-size fixed separating construction \cite{FSInv} by a
polynomial-size one. In characteristic zero, orbit-closure decision is
already in deterministic NC in the unit-cost arithmetic model
\cite{FSInv}, and input-dependent separators can be found in
polynomial time \cite{DM}.

For rational formulas, linearization connects identity testing with
free-skew-field invertibility \cite{HW}. Deterministic polynomial-time
white-box algorithms over $\Q$ and constructive noncommutative rank
algorithms are known \cite{GGOW,IQS}. Over infinite fields, randomized
polynomial-time matrix black-box testing follows from
\cite[Section~6]{DMbounds}; the general deterministic black-box
construction is quasipolynomial \cite{ACMRIT}. Polynomial-space testing
of shared rational circuits over $\Q$ and finite fields uses explicit
syntax \cite[Theorem~6.2 and Remark~6.4]{AJRIT}. Our rational consequences
instead give fixed queries under the stated domain promise.

Sections~\ref{sec:frame}--\ref{sec:resources} prove the main theorem,
including the characteristic and bit-cost bounds. The proof is
self-contained apart from elementary linear algebra and polynomial
arithmetic; invariant and realization theory enter only in the
consequences.

\section{A fixed frame for every tuple and seed}\label{sec:frame}
We first work over a field $\field$ of characteristic zero. A
left-linear circuit uses additions, scalar multiplications, and operations
$P\mapsto x_iP$. It never multiplies two previously computed
nonconstant polynomials. Circuits may share registers and outputs.

\begin{theorem}[Universal cyclic frame]\label{thm:frame}
Given $m,N\ge1$, put $r=m+2$ and $d=rN$. The integer recurrence
\eqref{eq:component} below defines an indexed family
$\{P_{k,j,0}:0\le j\le k<d\}$ such that, for every $n\le N$,
every tuple $A\in M_n(\field)^m$, and every $v\in\field^n$,
\begin{equation}\label{eq:framespan}
\spanop\{P_{k,j,0}(A)v:0\le j\le k<d\}=\calM(A;v).
\end{equation}
There are $d(d+1)/2$ indexed outputs, of degree at most
$\lfloor(d-1)/3\rfloor$, with a shared left-linear circuit of
$O(md^2)$ gates and integer constants of absolute value at most $d$.
The same conclusion holds in characteristic $p>\binom d2$.
\end{theorem}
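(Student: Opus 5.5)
We follow the route sketched in the introduction. Given $A\in M_n(\field)^m$ with $n\le N$ and $v\in\field^n$, we form the star encoding on $\field^{rn}$:
\[
K=\begin{pmatrix}0&L\\ C&0\end{pmatrix},\qquad L=(A_1,\ldots,A_m,-\textstyle\sum_iA_i),\qquad C=(I,\ldots,I)^{\mathsf T},
\]
together with $H=\operatorname{diag}(0,I,\ldots,(m+1)I)$. The ambient dimension is $rn\le d$.

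\textbf{Algebraic facts.} First we check that $LC=0$, so $K^3=0$. Next, each tag projection $E_i$ is a Lagrange polynomial in $H$ with integer-divided coefficients, and for $1\le i\le m$ the product $E_0KE_iKE_0$ restricts to $A_i$ on the central block. Consequently the joint cyclic module $\calM(K,H;\widehat\beta)$ of the lifted seed $\widehat\beta=(v,0,\ldots,0)$ has central component exactly $\calM(A;v)$. Indeed, $E_0$ applied to it contains every $A_wv$. Conversely, every vector reachable by words in $K,H$ has central component in $\calM(A;v)$, because the outer blocks are reached only through $C$ applied to central vectors, and return only through $L$.

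\textbf{Recurrence and rigidity.} We define the connection $\nabla=\partial_z+K/z+H$ and expand
\[
z^k\nabla^k\widehat\beta=\sum_j z^j\,u_{k,j}.
\]
Here $u_{k,j}$ is given by an integer triangular recurrence in $K$ and $H$ applied to $\widehat\beta$, with coefficients coming from $\partial_z(z^j)=jz^{j-1}$, hence of absolute value at most $d$. By the block structure, the central component of each $u_{k,j}$ is a left-linear expression $P_{k,j,0}(A)v$: multiplying by $K$ twice from the center inserts one letter $x_i$ on the left, $H$ only rescales tags, and the compensating block contributes $-\sum_i x_i$. This recurrence is \eqref{eq:component}, and it gives the shared circuit of size $O(md^2)$. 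Since $K^3=0$ forces each letter to cost two applications of $K$, and some further bookkeeping applies, the degree is bounded by $\lfloor(d-1)/3\rfloor$.

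The rigidity lemma is the core step. The $\field(z)$-span $W$ of the vectors $\nabla^k\widehat\beta$ stabilizes by $k<d$. We aim to show that $W$ is defined over $\field$, so that $W=V\otimes\field(z)$ for a constant subspace $V$ that is both $K$- and $H$-invariant. To do this, represent $W$ by a primitive polynomial exterior vector $\omega(z)=\sum_{\ell}w_\ell z^\ell$. $\nabla$-stability means $\nabla\omega=\lambda\omega$ on $\Lambda^{\dim W}$. The key steps are as follows.
\begin{itemize}
\item Comparing lowest-order terms, nilpotence of $K$ forces the relevant eigenvalue to vanish.
\item Comparing top coefficients gives $\ell w_\ell+(\text{action of }H)w_\ell+(\text{action of }K)\,w_{\ell+1}$-type terms.
\item The zero same-tag blocks of $K$ make the $K$-contribution to the leading tag-graded piece vanish. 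This yields $\ell w_\ell=0$, so $\ell=0$.
\end{itemize}
In characteristic $p$ we need $\ell\ne0$ in $\field$ for all $1\le\ell\le\binom d2$. This holds because the exterior coordinates are minors whose degree is bounded by $\binom d2$; the degree of $\nabla^k\widehat\beta$ grows at most $k$ per step.

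\textbf{Conclusion and main obstacle.} Once rigidity holds, $V$ is invariant under $K$ and $H$ and contains $\widehat\beta$, so $V\supseteq\calM(K,H;\widehat\beta)$. On the other hand, each $u_{k,j}$ lies in that module, so equality holds. Then
\[
V=\spanop_{\field}\{u_{k,j}:0\le j\le k<d\},
\]
because the coefficients of a spanning set over $\field(z)$ whose entries are polynomial span the constant space. Projecting by $E_0$ gives \eqref{eq:framespan}. The main obstacle is the leading-coefficient computation in the rigidity lemma. The difficulty is making precise how the $H$-grading and the zero same-tag blocks kill the $K$-terms at top degree, and controlling the lowest-order behavior coming from $K/z$. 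I would first try grading $\Lambda(\field^{rn})$ by total tag and comparing extreme graded components.
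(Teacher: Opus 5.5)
Your architecture is the paper's: the same compensated star encoding, the Krylov span of $\nabla^k\widehat\beta$, a primitive exterior vector for rigidity, the coefficient frame of $z^k\nabla^k\widehat\beta$, central projection, and the $\binom d2$ wedge-degree count in characteristic $p$. However, the decisive step of the rigidity lemma is not carried out, and the top-coefficient identity you wrote is wrong: there is no $w_{\ell+1}$, and $\ell w_\ell$ and $H^{[a]}w_\ell$ occur at different orders.

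Here is what is needed. Multiplying by $z$, invariance of the line reads $z\omega'+K^{[a]}\omega+zH^{[a]}\omega=c\,\omega$. Primitivity (a Bezout identity among the coprime coordinates) makes $c$ a polynomial, and a degree count gives $c=\alpha+\beta z$. This is what licenses your constant-term step $K^{[a]}w_0=\alpha w_0$, hence $\alpha=0$. The $z^{\ell+1}$ coefficient gives $H^{[a]}w_\ell=\beta w_\ell$, so $w_\ell$ lies in a single weight space of $H^{[a]}$. The $z^\ell$ coefficient is $\ell w_\ell+K^{[a]}w_\ell+H^{[a]}w_{\ell-1}=\beta w_{\ell-1}$. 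Project this onto the $\beta$-weight space. The two $w_{\ell-1}$ terms cancel, and $K^{[a]}w_\ell$ dies once you know $\Pi^{\wedge}_\beta K^{[a]}\Pi^{\wedge}_\beta=0$ on $\bigwedge^aE$. That transfer of the zero same-tag blocks holds because, in an $H$-eigenbasis, each term of $K^{[a]}$ changes exactly one factor's tag by a nonzero amount. This is where distinct tags are used; in characteristic $p$ they are guaranteed by $p>\binom d2$. What remains is $\ell w_\ell=0$. Your proposed comparison of extreme total-tag components would not decouple the equation, because $K^{[a]}$ feeds the top total-tag component from below through $C$. The weight to project onto must be the one selected by the top $z$-coefficient.

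The degree bound is also not established. Your stated reason, that each letter costs two applications of $K$, gives only $\deg P_{k,j,0}\le\lfloor k/2\rfloor$; the ``further bookkeeping'' is the actual point. The compensation holds formally ($LC=\sum_ix_i-\sum_ix_i=0$), so $K^2$ vanishes on the central block, and $H$ also kills the central block because its tag is $0$. Now expand $U_{k,j}$ into words of length at most $k$ in $K,H$. In any such word with nonzero central output, every letter comes from a factor $KH^aK$ with $a\ge1$, which costs three steps; this gives $\lfloor k/3\rfloor$. The paper encodes the same cancellation through the difference registers $D_{k,j,i}=L_{k,j,i}-L_{k,j,m+1}$ and a simultaneous induction with bounds $\lfloor k/3\rfloor$, $\lfloor(k-1)/3\rfloor$, $\lfloor(k-2)/3\rfloor$.

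There is also a small slip in your characteristic-$p$ paragraph: $z^k\nabla^k\widehat\beta$ has degree at most $k$, growing by one per step. That is what gives wedge degree at most $\binom t2\le\binom d2$.
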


The count is of indexed outputs: different indices may represent the same
polynomial or zero. No independence of these polynomials is asserted.
Their evaluated vectors span even when the cyclic module is proper or
the seed is zero. The characteristic-zero proof comes first; the bounded
positive-characteristic extension is proved in
Section~\ref{sec:characteristic}.

\subsection{Nilpotent residues with zero tag blocks}
For an operator $L$ on $E$, let $L^{[a]}$ denote its
\emph{additive} action on $\bigwedge^a E$:
\[
L^{[a]}(u_1\wedge\cdots\wedge u_a)
=\sum_{i=1}^a u_1\wedge\cdots\wedge Lu_i\wedge\cdots\wedge u_a.
\]
If $H$ is diagonalizable over $\field$, write $E_\lambda$ and
$\Pi_\lambda$ for its eigenspaces and their algebraic projections.
No inner product or adjoint is used.

\begin{lemma}[Nilpotent tag rigidity]\label{lem:rigidity}
Let $E$ be finite-dimensional over a characteristic-zero field $\field$.
Suppose $H$ is diagonalizable over $\field$, $K$ is nilpotent,
and $\Pi_\lambda K\Pi_\lambda=0$ for every eigenvalue of $H$.
Every rational subspace $V\subseteq E\otimes_\field\field(z)$ invariant
under $\nabla=\partial_z+K/z+H$ is the scalar extension of a
constant subspace $W\subseteq E$ invariant under both $K$ and $H$.
\end{lemma}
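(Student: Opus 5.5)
The plan is to encode the $\nabla$-invariant subspace $V$ by its Plücker vector and show that, once normalized to be primitive, this vector must be constant. Put $a=\dim_{\field(z)}V$; if $a=0$ there is nothing to prove. Choose an $\field(z)$-basis of $V$ and form its wedge in $\bigwedge^a E\otimes\field(z)$. Clearing denominators and dividing by the gcd of the coordinates (in $\field[z]$) gives a \emph{primitive} polynomial vector
\[
w(z)=\sum_{\ell=0}^{L}w_\ell z^\ell,\qquad w_L\neq0,
\]
which spans the line $\bigwedge^a V$. Primitivity means not every coordinate is divisible by $z$, so $w_0\neq0$. Since $\nabla$ is a derivation-type operator and $\partial_z$ acts coordinatewise, the induced operator on $\bigwedge^a$ is $\partial_z+K^{[a]}/z+H^{[a]}$. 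Invariance of $V$ gives a rational function $c(z)$ with
\[
z w'+K^{[a]}w+zH^{[a]}w=zc\,w.
\]
The left side is polynomial and $w$ is primitive, so $zc$ is a polynomial. Comparing degrees (the left side has degree $\le L+1$) forces $zc=c_0+c_1z$.

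Next I compare coefficients. At $z^0$ we get $K^{[a]}w_0=c_0w_0$. Since $K$ is nilpotent, so is $K^{[a]}$, and $w_0\neq0$ then forces $c_0=0$. At $z^L$ we get
\[
Lw_L+K^{[a]}w_L+(H^{[a]}-c_1)w_{L-1}=0,
\]
and at $z^{L+1}$ we get $H^{[a]}w_L=c_1w_L$. So $w_L$ lies in the $c_1$-eigenspace of $H^{[a]}$. This operator is diagonalizable, with eigenvectors the wedges of $H$-eigenvectors and eigenvalues the sums of the corresponding $H$-eigenvalues. Let $\Pi$ be the algebraic projection of $\bigwedge^a E$ onto this $c_1$-weight space and apply it to the $z^L$ relation. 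It kills $(H^{[a]}-c_1)w_{L-1}$ and fixes $Lw_L$.

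The key step, and the one I expect to need the most care, is to show $\Pi K^{[a]}\Pi=0$. Decompose $K=\sum_{\mu\neq\lambda}\Pi_\mu K\Pi_\lambda$; the diagonal blocks vanish by hypothesis. On a wedge of $H$-eigenvectors, $K^{[a]}$ replaces one factor of eigenvalue $\lambda$ by a component of eigenvalue $\mu\neq\lambda$. This shifts the total weight by $\mu-\lambda\neq0$, so $K^{[a]}$ has no weight-preserving component. Hence $\Pi K^{[a]}w_L=0$, leaving $Lw_L=0$. In characteristic zero this gives $L=0$.

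Therefore $w=w_0$ is a constant nonzero decomposable vector, so $V=W\otimes_\field\field(z)$ for the constant subspace $W\subseteq E$ determined by $w_0$. For invariance of $W$, take $u\in W$. Then $\nabla u=Ku/z+Hu$ lies in $V$, so its image in $(E/W)\otimes\field(z)$ vanishes. Since $Ku$ and $Hu$ are constant, the two powers of $z$ separate, giving $Ku\in W$ and $Hu\in W$. No exterior algebra is needed for this last step.
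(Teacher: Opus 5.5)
Your proposal is correct and follows essentially the same route as the paper. Both arguments use the same chain of steps:
\begin{itemize}
\item a primitive Pl\"ucker vector $w$ with $w_0\neq0$;
\item a Bezout/degree argument showing that the eigenvalue (your $zc$, the paper's $c$) is affine in $z$;
\item nilpotence of $K^{[a]}$, which kills the constant term;
\item projection of the $z^{L}$ coefficient onto the relevant weight space of $H^{[a]}$, where the vanishing same-tag blocks give $Lw_L=0$;
\item the constant equations $u\wedge w_0=0$ defining $W$, and separation of the powers of $z$ in $\nabla u$ to obtain $KW,HW\subseteq W$.
\end{itemize}
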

\begin{proof}
The zero subspace is immediate. For $a=\dim_{\field(z)}V>0$, take an
exterior product of a rational basis, clear denominators, and divide all
polynomial coordinates by their gcd. This gives a primitive nonzero vector
\[
w(z)=w_0+w_1z+\cdots+w_\ell z^\ell
\quad\text{in }\bigwedge^aE[z],\qquad w_0,w_\ell\ne0.
\]
Indeed, $w_0=0$ would make $z$ a common coordinate factor. The line it
spans is invariant under the induced connection, so
\begin{equation}\label{eq:exterior}
zw'+(K^{[a]}+zH^{[a]})w=c(z)w
\end{equation}
for some $c\in\field(z)$. Coprime coordinates have a polynomial
Bezout identity, which makes $c$ polynomial. A coordinate of degree
$\ell$ shows that $\deg c\le1$. Write $c=\alpha+\beta z$.
The constant and highest coefficients give
\begin{equation}\label{eq:endcoeff}
K^{[a]}w_0=\alpha w_0,\qquad H^{[a]}w_\ell=\beta w_\ell.
\end{equation}
A nilpotent $K$ is strictly triangular in a suitable basis, and its
additive exterior action is triangular with zero diagonal. Thus
$K^{[a]}$ is nilpotent and $\alpha=0$.

We need a statement about full eigenspace blocks, not merely diagonal
entries. In an eigenbasis of $H$, a nonzero term of $K^{[a]}$ replaces
one factor of tag $\lambda$ by a factor of tag $\mu\ne\lambda$.
It therefore changes the total $H^{[a]}$ weight by $\mu-\lambda\ne0$.
For every eigenvalue $\gamma$ of $H^{[a]}$, its complete spectral
projection $\Pi^{\wedge}_\gamma$ consequently satisfies
\begin{equation}\label{eq:zeroweight}
\Pi^{\wedge}_\gamma K^{[a]}\Pi^{\wedge}_\gamma=0.
\end{equation}
This holds even if different sums of tags coincide.

If $\ell>0$, the coefficient of $z^\ell$ in \eqref{eq:exterior} is
\[
\ell w_\ell+K^{[a]}w_\ell+H^{[a]}w_{\ell-1}
=\beta w_{\ell-1}.
\]
Project onto the $\beta$-eigenspace of $H^{[a]}$. By
\eqref{eq:endcoeff} and \eqref{eq:zeroweight} the second term is killed;
the last terms cancel. Hence $\ell w_\ell=0$, contradicting
characteristic zero. It follows that $\ell=0$.

The constant nonzero decomposable vector $w_0$ defines the subspace by
$u\wedge w_0=0$. These equations have constant coefficients, so
$V=W\otimes_\field\field(z)$ for a constant $W$.
For $u\in W$, invariance says that $Ku+zHu$ vanishes in
$(E/W)[z]$. Comparing coefficients gives $KW,HW\subseteq W$.
\end{proof}

\begin{remark}[Both hypotheses have a role]
Nilpotence alone does not imply rigidity. With
$H=\operatorname{diag}(1,0)$ and
$K=\left(\begin{smallmatrix}1&-1\\1&-1\end{smallmatrix}\right)$,
we have $K^2=0$, but $(1,1+z)^{\mathsf T}$ is horizontal for
$\partial_z+K/z+H$. Its rational line is nonconstant. The same-tag
blocks are nonzero in this example. Conversely, zero same-tag blocks
alone do not suffice. Take $H=\operatorname{diag}(0,1)$ and
$K=\left(\begin{smallmatrix}0&1\\1&0\end{smallmatrix}\right)$.
For $w=(z-1,-1)^{\mathsf T}$ one has $\nabla w=w/z$; its rational
line is nonconstant, and $K$ is not nilpotent. Our encoding below
satisfies both hypotheses identically.
\end{remark}

\subsection{From rigidity to a finite recurrence}
For operators $H,K$ satisfying Lemma~\ref{lem:rigidity}, define
\begin{equation}\label{eq:triangle}
U_{0,0}=I,\qquad
U_{k+1,j}=(K+(j-k)I)U_{k,j}+HU_{k,j-1},
\end{equation}
with an out-of-range index interpreted as zero.

\begin{lemma}[Differential coefficient frame]\label{lem:triangle}
If $\dim E\le d$, then for every $v\in E$, the vectors
$U_{k,j}(H,K)v$, $0\le j\le k<d$, span $\calM(H,K;v)$.
\end{lemma}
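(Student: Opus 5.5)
Set $\widehat v=v$ and consider the $\field(z)$-span $V$ of the iterates $\nabla^k v$, $k\ge0$, inside $E\otimes\field(z)$, where $\nabla=\partial_z+K/z+H$. The plan is to identify $V$ with the scalar extension of the joint cyclic module, and then read the frame off the coefficients of $z^k\nabla^k v$.

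\textbf{Step 1: the span stabilizes early.} Put $V_k=\spanop_{\field(z)}\{v,\nabla v,\ldots,\nabla^{k}v\}$. Once $\nabla^{k}v\in V_{k-1}$, we get $\nabla V_{k-1}\subseteq V_{k-1}$. This uses the Leibniz rule $\nabla(gu)=g'u+g\nabla u$ for $g\in\field(z)$. Since $\dim E\le d$, the chain stabilizes with $k\le d$, so $V=V_{d-1}$ is $\nabla$-invariant.

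\textbf{Step 2: rigidity identifies $V$.} By Lemma~\ref{lem:rigidity}, $V=W\otimes\field(z)$ for a constant $W$ invariant under $K$ and $H$. We have $v\in V\cap E=W$, so $W\supseteq\calM(H,K;v)$.

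Conversely, $M=\calM(H,K;v)$ is invariant under $K$ and $H$, so $M\otimes\field(z)$ is $\nabla$-invariant and contains $v$. Hence $V\subseteq M\otimes\field(z)$, and so $W=M$.

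\textbf{Step 3: relate $U_{k,j}$ to the iterates.} Expand $z^k\nabla^k v=\sum_j z^j\,U_{k,j}v$ as a polynomial in $z$ with operator coefficients. Write $D_k=z^k\nabla^k$, so that
\[
D_{k+1}=z^{k+1}\nabla\,z^{-k}D_k=z\nabla D_k-kD_k .
\]
Applying $z\nabla=z\partial_z+K+zH$ to $\sum_j z^jU_{k,j}v$ and subtracting $k$ times it gives the coefficient of $z^j$ as
\[
(j+K-k)U_{k,j}v+HU_{k,j-1}v.
\]
This is exactly recurrence \eqref{eq:triangle}, and $0\le j\le k$ holds by induction. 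I expect this bookkeeping of the shift $(j-k)$ to be the only delicate point, and the check is routine.

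\textbf{Step 4: conclude spanning.} Each $U_{k,j}$ is a noncommutative polynomial in $H,K$, so every $U_{k,j}v$ lies in $M$. Let $F=\spanop_\field\{U_{k,j}v:0\le j\le k<d\}\subseteq M$. By Step 1, the vectors $z^k\nabla^kv$ with $k<d$ span $V=M\otimes\field(z)$ over $\field(z)$. Each of them lies in $F\otimes\field[z]$, so $M\otimes\field(z)\subseteq F\otimes\field(z)$. Comparing dimensions gives $\dim_\field F\ge\dim_\field M$, hence $F=M$.

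The main obstacle is already absorbed by Lemma~\ref{lem:rigidity}. Without rigidity, $V$ would only be a $\field(z)$-subspace that could a priori be larger than, or differ from, the constant module. The remaining risk is only the index convention in Step 3.
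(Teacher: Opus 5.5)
Your proof is correct and follows the paper's argument essentially step by step. The steps are: the rational Krylov span stabilizes within $d$ iterates; Lemma~\ref{lem:rigidity} identifies it with $\calM(H,K;v)\otimes\field(z)$; the identity $z^{k+1}\nabla^{k+1}=z\nabla\,(z^k\nabla^k)-k\,z^k\nabla^k$ produces recurrence \eqref{eq:triangle}; and a dimension count on the coefficient span finishes. The only cosmetic difference is that the paper works with the first dependence index $t$ and then notes that rows through $d-1$ contain the first $t$ vectors, whereas you use $V=V_{d-1}$ directly.
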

\begin{proof}
If $v=0$ there is nothing to prove. Otherwise put $q_i=\nabla^iv$.
Let $t$ be the first dependence index: $q_0,\ldots,q_{t-1}$ are
independent over $\field(z)$ and $q_t$ belongs to their span.
Then $t\le\dim E$. Differentiating a rational linear combination by
$\nabla$ gives derivatives of its coefficients and the next $q_i$.
Thus the first dependence makes their rational span $V$ invariant.
Every $q_i$ belongs to the scalar extension of
$\calM(H,K;v)$. Lemma~\ref{lem:rigidity} makes $V$ constant and
jointly invariant, and it contains $v$. Hence
\[
V=\calM(H,K;v)\otimes_\field\field(z).
\]
Set $f_i=z^iq_i$. Direct calculation gives
\[
f_{i+1}=zf_i'+(K-iI+zH)f_i,
\qquad f_i=\sum_{j=0}^iU_{i,j}(H,K)v\,z^j.
\]
These are polynomial vectors of degree at most $i$. Let $L$ be the
coefficient span of $f_0,\ldots,f_{t-1}$. It is contained in the joint
cyclic module. Conversely, their
rational span is that entire module after extension, and is contained in
$L\otimes_\field\field(z)$. Comparing dimensions gives
$L=\calM(H,K;v)$. Rows through $d-1$ include the first $t$ vectors.
\end{proof}

\subsection{The compensated encoding}
Let $A_1,\ldots,A_m$ act on $E=\field^n$, and set $r=m+2$.
Use blocks numbered $0,1,\ldots,m+1$ on $E^r$. Define
\begin{equation}\label{eq:star}
\begin{split}
H&=\operatorname{diag}(0,I,2I,\ldots,(m+1)I),\\
K(u_0,\ldots,u_{m+1})
&=\left(\sum_{i=1}^mA_iu_i-
  \Bigl(\sum_{i=1}^mA_i\Bigr)u_{m+1},u_0,\ldots,u_0\right).
\end{split}
\end{equation}
Equivalently,
\[
K=\begin{pmatrix}0&L\\C&0\end{pmatrix},\qquad
L=(A_1,\ldots,A_m,-\textstyle\sum_iA_i),\quad C=(I,\ldots,I)^{\mathsf T}.
\]
Since $LC=0$, we have $K^3=0$. All diagonal blocks of $K$ are zero.
For example, for two generators the entire gadget is
\[
K=\begin{pmatrix}
0&A_1&A_2&-A_1-A_2\\
I&0&0&0\\ I&0&0&0\\ I&0&0&0
\end{pmatrix},\qquad H=\operatorname{diag}(0,I,2I,3I).
\]
Although three consecutive $K$ moves vanish, inserting the fixed tag
projections isolates either generator. Nilpotence has therefore not
discarded the original word responses.
The tags are distinct, so the assumptions of the rigidity lemma hold
over $\field$ itself, without any spectral normalization.

For $\widehat v=(v,0,\ldots,0)$,
\begin{equation}\label{eq:modulecopy}
\calM(H,K;\widehat v)=\calM(A;v)^r.
\end{equation}
The right side is invariant and contains the seed. For the reverse
inclusion, each block projection $E_b$ is a polynomial in $H$, by
Lagrange interpolation at the fixed tags. The operator $E_bKE_0$
copies a central vector to leaf $b$, while $E_0KE_iKE_0$ applies
$A_i$ centrally for $1\le i\le m$. Thus all word responses can be
generated centrally and copied to each leaf. These projections are used
only to prove \eqref{eq:modulecopy}; the construction does not compute
them or choose a basis from the input.

\subsection{The actual integer polynomials}
We now compile the lifted recurrence to the original generators. Put
$P_{0,0,0}=1$ and $P_{0,0,b}=0$ for $b>0$. For $0\le j\le k+1$,
define
\begin{equation}\label{eq:component}
\begin{split}
P_{k+1,j,0}
 &=\sum_{i=1}^m x_i(P_{k,j,i}-P_{k,j,m+1})+(j-k)P_{k,j,0},\\
P_{k+1,j,b}
 &=P_{k,j,0}+(j-k)P_{k,j,b}+bP_{k,j-1,b}\qquad(1\le b\le m+1).
\end{split}
\end{equation}
Out-of-range indices are zero. Evaluation of this block vector on $v$
is exactly $U_{k,j}(H,K)\widehat v$ under \eqref{eq:star}.
Take rows $0,\ldots,d-1$ with the uniform bound $d=rN$.
Lemma~\ref{lem:triangle},
\eqref{eq:modulecopy}, and projection onto the central block give
\eqref{eq:framespan}.

Each update has $O(m)$ work for the central component and $O(m)$ work
for all leaves, so the shared left-linear circuit has $O(md^2)$ gates.
All scalars have absolute value at most $d$.
The cancellation in the compensation gives a stronger degree bound
than counting center--leaf moves alone.
\begin{proposition}[Compensated degrees]\label{prop:degree}
Write $C_{k,j}=P_{k,j,0}$, $L_{k,j,b}=P_{k,j,b}$ and
$D_{k,j,i}=L_{k,j,i}-L_{k,j,m+1}$. With $\deg0=-\infty$,
\[
\deg C_{k,j}\le\lfloor k/3\rfloor,\qquad
\deg L_{k,j,b}\le\lfloor(k-1)/3\rfloor,\qquad
\deg D_{k,j,i}\le\lfloor(k-2)/3\rfloor.
\]
\end{proposition}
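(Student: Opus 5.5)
Induction on $k$, proving the three bounds simultaneously for all $j$ (and all $b$, $i$). For $k=0$ we have $C_{0,0}=1$ of degree $0=\lfloor0/3\rfloor$, while $L_{0,0,b}=0$ and hence $D_{0,0,i}=0$. These have degree $-\infty$, which is consistent with the negative floors $\lfloor-1/3\rfloor=-1$ and $\lfloor-2/3\rfloor=-1$. Out-of-range indices are zero and satisfy every bound trivially.

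For the inductive step, read off the recurrences \eqref{eq:component} in terms of $C,L,D$:
\[
C_{k+1,j}=\sum_{i=1}^m x_iD_{k,j,i}+(j-k)C_{k,j},\qquad
L_{k+1,j,b}=C_{k,j}+(j-k)L_{k,j,b}+bL_{k,j-1,b}.
\]
Subtracting the leaf recurrence for $b=m+1$ from the one for $b=i$ cancels the common term $C_{k,j}$ (this is the compensation):
\[
D_{k+1,j,i}=(j-k)D_{k,j,i}+\bigl(iL_{k,j-1,i}-(m+1)L_{k,j-1,m+1}\bigr).
\]

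Each bound then follows by taking the maximum over the terms.
\begin{itemize}
\item For $C$: $\deg C_{k+1,j}\le\max(1+\lfloor(k-2)/3\rfloor,\lfloor k/3\rfloor)$. Since $1+\lfloor(k-2)/3\rfloor=\lfloor(k+1)/3\rfloor$ and $\lfloor k/3\rfloor\le\lfloor(k+1)/3\rfloor$, this gives the bound.
\item For $L$: $\deg L_{k+1,j,b}\le\max(\lfloor k/3\rfloor,\lfloor(k-1)/3\rfloor)=\lfloor k/3\rfloor$, as required.
\item For $D$: the first term has degree at most $\lfloor(k-2)/3\rfloor$. The second term is a scalar combination of leaf registers at row $k$, so it has degree at most $\lfloor(k-1)/3\rfloor=\lfloor((k+1)-2)/3\rfloor$, as required.
\end{itemize}
If the $L$-bound is established first at row $k$, the induction can be ordered without circularity, since every right-hand side uses only row $k$.

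The main obstacle is the $D$ step. Without the cancellation of $C_{k,j}$, the difference of leaves would only inherit the leaf bound $\lfloor k/3\rfloor$, and the $C$ recurrence would then grow by one degree every two steps instead of every three. The point is that the central register feeds all leaves identically, so it drops out of every difference $L_{\cdot,i}-L_{\cdot,m+1}$. One must also check that the $bL_{k,j-1,b}$ term with distinct coefficients $i$ and $m+1$ does not break the bound. It does not, because it involves only row-$k$ leaf registers, whose degree is already at most $\lfloor(k-1)/3\rfloor$; no cancellation is needed there.

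Finally, the degree claim of Theorem~\ref{thm:frame} follows by taking $k\le d-1$ in the $C$-bound, which gives degree at most $\lfloor(d-1)/3\rfloor$.
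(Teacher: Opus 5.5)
Your proof is correct and follows essentially the same route as the paper. Both use a simultaneous induction on $k$, driven by the difference recurrence $D_{k+1,j,i}=(j-k)D_{k,j,i}+iL_{k,j-1,i}-(m+1)L_{k,j-1,m+1}$ (where the common central term cancels), followed by a term-by-term comparison of floors; your base case, including the negative floors at $k=0$, and your inductive floor computations are all right.
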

\begin{proof}
Only $C_{0,0}=1$ is nonzero initially. Subtracting the two leaf
updates cancels their common central term and gives
\[
D_{k+1,j,i}=(j-k)D_{k,j,i}
  +iL_{k,j-1,i}-(m+1)L_{k,j-1,m+1}.
\]
Assuming the three bounds at row $k$, the central update has degree
at most $1+\lfloor(k-2)/3\rfloor=\lfloor(k+1)/3\rfloor$;
the leaf update at most $\lfloor k/3\rfloor$; and the difference
update at most $\lfloor(k-1)/3\rfloor$. These are the next-row bounds.
\end{proof}
The degree bound concerns the free polynomials, including their exact
cancellations. Keeping the differences as separate registers makes
this bound visible to a syntactic degree counter with $O(md^2)$ gates;
the original emitter need not change. This completes the
characteristic-zero proof of Theorem~\ref{thm:frame}.

\section{From a frame to an ordinary matrix query}\label{sec:duality}
The conversion in this section is explicit and does not ask the oracle
for polynomial coefficients. Set $N=S$ and retain every component
register in \eqref{eq:component}, in lexicographic order $(k,j,b)$.
There are exactly $R=rd(d+1)/2$ such registers.

\subsection{Eliminating constant transitions}
Write the register vector as $p$. Its initial vector $b$ is one at
$(0,0,0)$ and zero elsewhere. There are fixed integer matrices
$C_0,C_1,\ldots,C_m$ such that
\begin{equation}\label{eq:automaton}
p=b+C_0p+\sum_{i=1}^m x_iC_ip.
\end{equation}
Every dependency advances the row level $k$, so all $C_i$ are strictly
lower triangular and $C_0^d=0$. Define
\begin{equation}\label{eq:emit}
M=\sum_{a=0}^{d-1}C_0^a=(I-C_0)^{-1},\qquad
T_i=MC_i,\qquad b'=Mb.
\end{equation}
These are ordinary integer matrices and a fixed integer vector.
Multiplication of \eqref{eq:automaton} by $M$ gives
$p=b'+\sum_i x_iT_ip$. Repeated substitution, in word order, yields
\begin{equation}\label{eq:coefficient}
[i_1\cdots i_k]P_a=e_a^{\mathsf T}T_{i_1}\cdots T_{i_k}b'.
\end{equation}
Products of $d$ transitions are zero because each factor advances the
level. Thus the substitution is a finite polynomial expansion.
For every polynomial $f$, it follows that
\begin{equation}\label{eq:pairing}
e_a^{\mathsf T}f(T)b'
=\sum_w([w]f)([w]P_a).
\end{equation}
In particular the order of the $T_i$ agrees with the order of the
letters; no reversal or coefficient oracle is implicit in this identity.

For completeness, the transition entries can be read directly from
\eqref{eq:component}. In a central register $(k+1,j,0)$, the letter
$i$ has coefficient $+1$ from $(k,j,i)$ and $-1$ from
$(k,j,m+1)$. Its constant transition is $(j-k)$ from $(k,j,0)$.
A leaf register $(k+1,j,b)$ has constant transitions $1$ from
$(k,j,0)$, $(j-k)$ from $(k,j,b)$, and $b$ from $(k,j-1,b)$.
Invalid predecessors are omitted. This specifies every matrix entry
from the parameters alone, including the zero registers.

\subsection{The hidden affine ABP representation}
Topologically order the vertices of an ABP. For an edge label
$c_0+\sum_i c_ix_i$ from $u$ to $v$, put its coefficients in
the $(u,v)$ entries of $B_0,B_i$. After merging parallel edges, these
are strictly upper triangular. The finite path sum is
\[
f=e_{\rm src}^{\mathsf T}
\Bigl(I-B_0-\sum_i x_iB_i\Bigr)^{-1}e_{\rm snk}.
\]
The inverse here is a finite sum, because the graph is acyclic.
Put
\begin{equation}\label{eq:abprealization}
Q_0=\sum_{a=0}^{S-1}B_0^a,\qquad
A_i=Q_0B_i,\qquad
\alpha=e_{\rm src},\qquad\beta=Q_0e_{\rm snk}.
\end{equation}
Expanding at the occurrences of nonconstant letters gives
\begin{equation}\label{eq:abpcoefficient}
[i_1\cdots i_k]f
=e_{\rm src}^{\mathsf T}Q_0B_{i_1}Q_0\cdots B_{i_k}Q_0e_{\rm snk}
=\alpha^{\mathsf T}A_{i_1}\cdots A_{i_k}\beta.
\end{equation}
For $k=0$ this is $\alpha^{\mathsf T}\beta$.
The $Q_0$ before the first letter, after the last letter, and between
letters retain all constant paths. Thus \eqref{eq:abpcoefficient}
holds without a homogeneity assumption. A smaller ABP can be padded
with isolated zero states to dimension $S$.

\begin{proof}[Correctness of Theorem~\ref{thm:hitting}]
Assume first characteristic zero. If $f(T)=0$,
\eqref{eq:pairing} and \eqref{eq:abpcoefficient} imply, for every
central frame register $a$,
\[
0=e_a^{\mathsf T}f(T)b'
=\alpha^{\mathsf T}P_a(A)\beta.
\]
Theorem~\ref{thm:frame} applies to the hidden tuple $A$ and its
hidden seed $\beta$. Hence $\alpha^{\mathsf T}$ vanishes on all of
$\calM(A;\beta)$, so \eqref{eq:abpcoefficient} is zero for every
word. Therefore $f\equiv0$. The reverse implication is immediate.
The proof uses the hidden representation only to certify the fixed
query; the algorithm reads solely the returned matrix $f(T)$.
The same argument works over the positive-characteristic fields
covered by the frame theorem, proved next.
\end{proof}

\section{Characteristic and construction costs}\label{sec:resources}
\subsection{The bounded positive-characteristic argument}\label{sec:characteristic}
Lemma~\ref{lem:rigidity} was stated in characteristic zero. It is
false for arbitrary rational subspaces in characteristic $p$:
with $K=H=0$, the line generated by $e_1+z^pe_2$ is invariant
and nonconstant. For the cyclic spaces actually used by the algorithm,
however, the primitive exterior degree has a uniform bound.

\begin{proposition}[Bounded-characteristic cyclic frame]\label{prop:characteristic}
The frame in Theorem~\ref{thm:frame} spans the same joint cyclic
module in characteristic $p>\binom d2$.
\end{proposition}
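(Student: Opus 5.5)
The plan is to rerun the proof of Theorem~\ref{thm:frame} over a field of characteristic $p>\binom d2$. Characteristic zero was used in Lemma~\ref{lem:rigidity} at exactly one point: to conclude $\ell=0$ from $\ell w_\ell=0$. Everything else is characteristic-free, or needs only that the tags are distinct in $\field$. So instead of invoking Lemma~\ref{lem:rigidity} for an arbitrary invariant rational subspace, I will apply its argument only to the specific subspace used in Lemma~\ref{lem:triangle}. For that subspace I will bound the degree $\ell$ of the primitive exterior vector by $\binom d2$. Then $0<\ell<p$ makes $\ell w_\ell=0$ contradictory again, exactly as in characteristic zero.

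\textbf{Hypotheses over $\field$.} The tags are $0,1,\ldots,m+1$. Since $d=rN\ge m+2$, we have $p>\binom d2\ge\binom{m+2}2\ge m+1$. Hence the tags are distinct in $\field$, and $H$ is diagonalizable with these eigenspaces. Lagrange interpolation therefore still expresses each block projection $E_b$ as a polynomial in $H$, so \eqref{eq:modulecopy} holds. The weight-shift argument behind \eqref{eq:zeroweight} needs only $\mu-\lambda\ne0$ in $\field$, which again follows from distinctness. The remaining ingredients also hold verbatim:
\begin{itemize}
\item nilpotence of $K^{[a]}$, hence $\alpha=0$;
\item polynomiality of $c$ via the Bezout identity, and $\deg c\le1$;
\item the recurrence $f_{i+1}=zf_i'+(K-iI+zH)f_i$, with $f_i=\sum_{j\le i}U_{i,j}v\,z^j$;
\item invariance of $V$ at the first dependence index $t$.
\end{itemize}

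\textbf{Degree bound.} Let $t\le\dim E^r=rn\le d$ be the first dependence index. Then $V$ has the $\field(z)$-basis $q_0,\ldots,q_{t-1}$, and equally the basis $f_0,\ldots,f_{t-1}$, since $f_i=z^iq_i$. The wedge $F=f_0\wedge\cdots\wedge f_{t-1}$ is a nonzero polynomial vector in $\bigwedge^tE^r[z]$ of degree at most $0+1+\cdots+(t-1)=\binom t2\le\binom d2$. The primitive vector $w$ is proportional to $F$ over $\field(z)$. Clearing the gcd of the coordinates of $F$ shows $F=g\,w$ up to a scalar, with $g\in\field[z]$. Hence $\ell=\deg w\le\deg F\le\binom d2<p$.

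\textbf{Conclusion.} If $\ell>0$, the coefficient comparison in the proof of Lemma~\ref{lem:rigidity} gives $\ell w_\ell=0$ with $w_\ell\ne0$ and $\ell$ invertible in $\field$, a contradiction. So $\ell=0$, and $V$ is the scalar extension of a constant subspace $W$ invariant under $K$ and $H$. The dimension comparison of Lemma~\ref{lem:triangle} then gives that the coefficient vectors $U_{k,j}(H,K)\widehat v$ with $0\le j\le k<d$ span $\calM(H,K;\widehat v)=\calM(A;v)^r$. Projecting to the central block yields \eqref{eq:framespan}.

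The main obstacle is the degree bound: $w$ must be controlled through the explicit polynomial basis $f_i$ rather than an arbitrary rational basis, which is exactly what fails in the counterexample with $e_1+z^pe_2$. The rest is checking that no other step divides by an integer larger than $m+1$.
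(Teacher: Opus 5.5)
Your proposal is correct and follows essentially the same route as the paper: you bound the primitive exterior degree by $\deg(f_0\wedge\cdots\wedge f_{t-1})\le\binom t2\le\binom d2<p$, and you check that the tags stay distinct, so the zero same-weight blocks and the Lagrange projections survive; you then rerun the $\ell w_\ell=0$ argument of Lemma~\ref{lem:rigidity} and the dimension comparison of Lemma~\ref{lem:triangle}. Your check $p>\binom d2\ge\binom{m+2}2\ge m+1$ justifies tag distinctness a little more directly than the paper's appeal to $d\ge3$, and the zero-seed case you leave implicit is as trivial as in Lemma~\ref{lem:triangle}.
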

\begin{proof}
Suppose $p>\binom d2$ and use the compensated encoding of a
tuple of dimension at most $N$. If the seed is nonzero, let
$t\le d$ be the first rational Krylov dependence index, as in
Lemma~\ref{lem:triangle}. The independent polynomial vectors
$f_i=z^i\nabla^iv$, $0\le i<t$, have degree at most $i$.
Their nonzero exterior product has degree at most
\[
\sum_{i=0}^{t-1}i=\binom t2\le\binom d2.
\]
Division by its coordinate gcd only lowers degree. Thus its primitive
representative has $0\le\ell<p$.
Since $d=(m+2)N\ge3$, the same characteristic bound makes the
tags $0,\ldots,m+1$ distinct. The residue is nilpotent, and its
additive exterior action still has zero same-weight blocks: one
nonzero term changes exactly one tag by a nonzero difference in the
field. The proof of Lemma~\ref{lem:rigidity} therefore gives
$\ell w_\ell=0$. Now $\ell<p$ forces $\ell=0$.
The constant-space and coefficient-spanning arguments follow unchanged.
This proves the characteristic extension of Theorem~\ref{thm:frame},
and completes the algebraic assertion of Theorem~\ref{thm:hitting}.
\end{proof}

The bound is a sufficient characteristic condition, not a claim of
necessity or coverage of all smaller fields. The construction actually
fails for some small characteristics: the supplied exact implementation
at $(m,S)=(2,3)$, reduced modulo $2$, sends the nonzero three-vertex
ABP polynomial $x_1^2$ to the full zero matrix of order $312$.
No positive-characteristic invariant-ring consequence is used.

\subsection{Explicit entries and their heights}
Each row of $C_0$ has at most three predecessors, and each row of a
letter matrix $C_i$ has at most two entries. The row-sum norm satisfies
\[
\|C_0\|_\infty\le3d,\qquad
\|C_i\|_\infty\le2,\qquad \|b\|_\infty=1.
\]
Consequently every entry of $T_i$ and $b'$ has absolute value at most
\begin{equation}\label{eq:height}
2d(3d)^{d-1}.
\end{equation}
Indeed, \eqref{eq:emit} is a sum of $d$ powers and the norm is
submultiplicative. The stated bit bound follows directly.

There is no need to multiply dense matrices to obtain this finite
inverse. For $T_i$, solve $T_i=C_i+C_0T_i$ row by row in the
register order. Start its current row with the corresponding row of
$C_i$, then add the at most three already computed predecessor rows,
with the coefficients from $C_0$. This uses $O(R)$ bounded-height
integer operations per row, or $O(mR^2)$ for all matrices. The same
triangular recurrence computes $b'$. Intermediate partial rows satisfy
the same absolute-sum bound as \eqref{eq:height}. The construction is
therefore polynomial in bit time as well as in arithmetic operations.

\subsection{Charging the oracle and its returned matrix}
The dense query description has $mR^2$ integer entries, each of
$O(d\log(d+1))$ bits; a sparse encoding may save space but is not
needed for the polynomial bound. The response has $R^2$ entries,
and its zero test charges all of them. These costs explain why the
one-query statement does not express information compression.

For a rational ABP, let $L$ be the total bit length of its merged
edge coefficients. A common denominator $q$ has $O(L)$ bits.
Every path has at most $S-1$ edges, so multiplication by $q^{S-1}$
clears all denominators in its matrix evaluation. The number of paths
in a simple acyclic graph is at most $2^S$; expanding the affine labels
adds at most $(m+1)^{S-1}$ terms per path. If the query entries have
$B=O(d\log(d+1))$ bits, a product of $k\ge1$ query matrices has
entry heights bounded by $R^{k-1}2^{kB}$. For $k=0$, the empty
product is $I_R$ and its entries have absolute value at most $1$.
Combining these estimates gives a polynomial bit bound in $m,S,L$
for all returned entries. Topological dynamic programming evaluates the
ABP with polynomially many rational $R\times R$ matrix operations,
whose intermediate values obey the same path bounds. This proves the
rational bit-cost assertion without imposing a hidden unit-cost
assumption on long integers or parallel-edge multiplicities.

\section{Implementation and consequence boundaries}\label{sec:verification}
The accompanying standard-library Python emitter materializes the
integer matrices directly from the register recurrence, without expanding
the probe monomials. For $(m,S)=(2,2)$ it produces order $144$,
$924$ nonzero entries in each matrix, and maximum entry bit length $12$.
For $(2,4)$ the order is $544$. These are reproducible construction
examples, not matrix-order optimality or practical performance claims.

Exact checks use independently expanded affine source--sink paths,
full matrix evaluation, and the coefficient-dual readout, including
constant coefficients, cancellations, proper cyclic modules, and allowed
positive characteristics. The independent run checks eight affine ABP
fixtures over $\Q$ and $\mathbb F_{127}$: $16\,688$ compiler
coefficient equalities, $328$ affine-realization coefficient equalities,
$5\,968$ component readouts, and $16$ full matrix comparisons.
It separately reproduces the characteristic-two failure.
The verification records specify each finite input and its result.
The general theorems follow from the proofs above, not from sampling.
The Lean~4 \cite{Lean4} artifact, built on mathlib \cite{Mathlib},
verifies eigenbasis transport of rigidity and the actual
star and rectangular hitting equivalences in both characteristic regimes,
as well as recognizable-series finite substitution. Its statement map and
kernel records also cover parallel-edge merging, two-sided series readouts,
deterministic minimal recovery, nilpotent flags, construction and recovery
arithmetic models, canonical rational answer heights, and rational identity
testing at valid supplied points. Scalar invariant polynomials, homogeneous
components and shared circuits are formalized. Full word-trace generation,
actual orbit-closure geometry and integrality over the specified homogeneous
subalgebra also have kernel proofs; native interpreter and integer-library
runtimes are not certified.
AI tools assisted with intermediate proof development, literature review,
exact implementation checks, and Lean verification; their role is
disclosed at the end.

The rectangular emitter is checked against a naive triangular solve and
explicit component expansions, including its shared-row implementation.
The reconstruction checks execute $9\,486$ complete fixed
block queries across nine field/fixture cases, recover minimal
representations of dimensions zero through three, and compare the
returned acyclic ABPs with literal path expansions. Thirty additional
commutator queries check noncommutative word order. A supplied-base-point
example checks both inverse identities for a full $936$-dimensional
perturbation, using its independently proved two-state realization bound.

The accompanying repository at \url{https://github.com/Lzp88/PIT}
contains the Lean~4 audit sources, the statement map and kernel audit
records, and the exact Python reproduction scripts.

\clearpage
\appendix
\section{Fixed separating invariants}\label{sec:separators}\label{app:consequences}

The compensated frame also gives a fixed polynomial-size map for the
simultaneous-conjugation quotient.  The output is a family of scalar
invariants, each obtained by taking the trace of a matrix polynomial.

\begin{theorem}[Fixed orbit-closure separators]\label{thm:separators}
Fix $m,N\ge1$ and put $\Delta=2(m+2)N^2$.  There is an indexed family
of at most $\Delta(\Delta+1)/2$ trace polynomials, constructed from
$(m,N)$ alone, with the following property.  For every fixed $n\le N$
and $A,B\in M_n(\C)^m$, its values on $A$ and $B$ agree if and only if
their simultaneous-conjugation orbit closures intersect.  The degree is
at most $\lfloor(\Delta-1)/3\rfloor$, and the matrix-polynomial outputs have a shared
$O(m\Delta^2)$ arithmetic circuit acting on matrices of the original
order.  A direct expansion into scalar entries has
$O(mn^3\Delta^2)$ arithmetic gates, including the final trace readouts.
\end{theorem}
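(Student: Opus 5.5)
The approach combines the classical trace-invariant description of the quotient with the universal cyclic frame of Theorem~\ref{thm:frame}, applied to a tuple acting on matrices. The classical input is Procesi--Razmyslov together with the closed-orbit theorem over $\C$: two tuples have intersecting orbit closures if and only if $\tr(A_w)=\tr(B_w)$ for every word $w$. The task is therefore to replace the exponentially many word traces by a fixed family whose values determine all of them, uniformly in $A$ and $B$.

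First step: build a joint operator tuple whose cyclic module records all word traces of both $A$ and $B$. Work on $E=M_n(\C)\oplus M_n(\C)$ of dimension $2n^2\le 2N^2$. Let the tuple act by $X_i(Y,Z)=(A_iY,B_iZ)$ and use the seed $v=(I,I)$. The joint cyclic module is then
\[
\calM(X;v)=\spanop\{(A_w,B_w)\}.
\]
Apply Theorem~\ref{thm:frame} with $N$ replaced by $2N^2$, so that $d=(m+2)2N^2=\Delta$. This gives the fixed polynomials $P_a=P_{k,j,0}$, $0\le j\le k<\Delta$, with
\[
\spanop\{(P_a(A),P_a(B))\}=\spanop\{(A_w,B_w)\}.
\]
The family is $\tr P_a$. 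It has $\Delta(\Delta+1)/2$ members, and its degree bound and shared left-linear circuit of $O(m\Delta^2)$ gates are inherited from Theorem~\ref{thm:frame}.

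Second step: deduce separation. The linear functional $\phi(Y,Z)=\tr Y-\tr Z$ on $E$ vanishes on every $(P_a(A),P_a(B))$ exactly when $\tr P_a(A)=\tr P_a(B)$ for all $a$. By the span equality this is equivalent to $\tr A_w=\tr B_w$ for all $w$, which is orbit-closure intersection. The converse direction needs no span argument: each $\tr P_a$ is a linear combination of word traces, hence a conjugation invariant, hence constant on orbit closures.

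Third step: costs. Evaluating the circuit on actual $n\times n$ matrices replaces each operation $x_iP$ by a matrix product costing $O(n^3)$, and additions and scalar multiples by $O(n^2)$. This gives $O(mn^3\Delta^2)$ gates, and each trace readout adds $O(n)$.

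The main obstacle is the uniformity in $n$. The frame must work simultaneously for every $n\le N$, and the dimension of the doubled space, $2n^2\le 2N^2$, must fit within the parameter used in Theorem~\ref{thm:frame}. That theorem is stated for all dimensions up to its parameter, so this should go through. A subtler point is that the frame is applied to a module where the $A_i$ act by left multiplication rather than on vectors; I would check that Theorem~\ref{thm:frame} only uses an arbitrary linear tuple on a space of dimension at most $N$. It does, so no new rigidity argument is needed.
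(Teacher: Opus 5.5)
Your proposal is correct and is essentially the paper's own proof. It uses the same doubled space $M_n(\C)\oplus M_n(\C)$ with left multiplication, seed $(I_n,I_n)$, functional $\tr Y-\tr Z$, and frame bound $2N^2$ giving $d=\Delta$, closes with word traces plus the orbit-closure criterion (the paper cites Procesi and Derksen--Makam), and does the same $O(n^3)$-per-product cost accounting; the only cosmetic difference is that the paper reads the degree bound $\lfloor(\Delta-1)/3\rfloor$ off Proposition~\ref{prop:degree} rather than directly from Theorem~\ref{thm:frame}.
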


\begin{proof}
Construct the central frame of Theorem~\ref{thm:frame} with module
dimension bound $2N^2$, and take
\[
 \mathcal S_{m,N,n}
 =\bigl(\tr P_{k,j,0}(A):0\le j\le k<\Delta\bigr).
\]
Each coordinate is a polynomial invariant: evaluation of a
noncommutative polynomial commutes with simultaneous conjugation, and
trace is unchanged by conjugation.  The notation denotes indexed
outputs; zeros and repeated polynomials cause no difficulty.

Suppose their values agree on $A$ and $B$.  On
$E=M_n(\C)\oplus M_n(\C)$, define
\[
 L_i(X,Y)=(A_iX,B_iY),\qquad
 v=(I_n,I_n),\qquad
 \ell(X,Y)=\tr X-\tr Y.
\]
The dimension of $E$ is at most $2N^2$.  Equality of the displayed
coordinates means that $\ell$ annihilates the central frame responses
$P_{k,j,0}(L)v$.  Theorem~\ref{thm:frame} makes these responses span
the full joint cyclic module.  Therefore, for every word $w$,
\[
 0=\ell(w(L)v)=\tr w(A)-\tr w(B).
\]
In characteristic zero the word traces generate the
simultaneous-conjugation invariant ring \cite[Theorem~1.3]{Procesi}.
Over $\C$, equality of all invariants is equivalent to intersection
of the orbit closures \cite[Theorem~1.1]{DM}; see also
\cite[Section~1.2]{FSInv}. Conversely, each displayed
polynomial is invariant and hence constant on an orbit closure, so an
intersection forces equality of all coordinates.

The number of indices and shared matrix circuit size follow from
\eqref{eq:component}. Proposition~\ref{prop:degree} gives
$\deg P_{k,j,0}\le\lfloor k/3\rfloor$, including the compensated
cancellation, and hence
\[
 \deg\mathcal S_{m,N,n}\le
 \left\lfloor\frac{\Delta-1}{3}\right\rfloor.
\]
Replacing an $n\times n$ matrix multiplication by its elementary
scalar circuit costs $O(n^3)$ gates.  Reading all final traces costs
$O(n\Delta^2)$ more, which is within the stated bound.
\end{proof}

For inputs with orders at most $N$, the constant output
$\tr P_{0,0,0}=\tr I$ first distinguishes unequal orders.  When the
orders agree, Theorem~\ref{thm:separators} applies in the common
representation space.  No trace value is fed back into the matrix
recurrence.

\begin{corollary}[An integral separating subalgebra]\label{cor:integral}
For each fixed $n\le N$, there is a polynomial-size family of
homogeneous invariants, with polynomial-size shared circuits, such
that $\C[M_n^m]^{\mathrm{GL}_n}$ is integral over the subalgebra they
generate.  Their degrees are at most $\lfloor(\Delta-1)/3\rfloor$.
\end{corollary}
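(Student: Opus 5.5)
The plan is to derive the corollary from Theorem~\ref{thm:separators} together with the standard link between separating invariants and integrality for reductive group actions. Fix $n\le N$ and let $\mathcal R=\C[M_n^m]^{\mathrm{GL}_n}$. The first step is to make the family homogeneous. Each trace polynomial $\tr P_{k,j,0}(A)$ from $\mathcal S_{m,N,n}$ splits into homogeneous components of degrees $0,\ldots,\lfloor(\Delta-1)/3\rfloor$. Since the conjugation action is linear, each component is again invariant. I will take the family $\mathcal F$ of all these components. This multiplies the count by at most $\lfloor(\Delta-1)/3\rfloor+1$, so it stays polynomial. Shared circuits for the components come from the usual interpolation trick: evaluate the shared $O(m\Delta^2)$ matrix circuit at $tA$ for $\deg+1$ distinct scalars $t$, and recover the components by a fixed Vandermonde inverse. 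Alternatively, one can run the standard homogenization of the circuit. Both routes cost only a polynomial factor. The degree bound is inherited directly.

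The second step shows that $\mathcal F$ separates orbit closures. If two tuples agree on $\mathcal F$, then they agree on every sum of components, hence on $\mathcal S_{m,N,n}$. Theorem~\ref{thm:separators} then shows that their orbit closures meet. In particular, agreement on $\mathcal F$ forces agreement on all of $\mathcal R$, because every invariant is constant on orbit closures and the two closures intersect.

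The third step, which I expect to be the crux, passes from separation to integrality. Let $\mathcal A=\C[\mathcal F]\subseteq\mathcal R$; both are graded and finitely generated, the latter by Hilbert's theorem. I will show that $\mathcal R$ is integral over $\mathcal A$ via the graded Nakayama and nullcone argument. Consider the null cone $\mathcal N$ of the action, namely the common zero set of the positive-degree elements of $\mathcal R$. If a tuple $A$ has all positive-degree elements of $\mathcal F$ vanishing, then $A$ agrees on $\mathcal F$ with the zero tuple. By the previous step, every homogeneous invariant of positive degree then vanishes at $A$, so $A\in\mathcal N$. Hence the zero set of $\mathcal A_+$ in $\operatorname{Spec}\mathcal R$ is the origin alone. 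By the Nullstellensatz, $\mathcal A_+\mathcal R$ is $\mathcal R_+$-primary, so $\mathcal R/\mathcal A_+\mathcal R$ is finite-dimensional. Graded Nakayama then gives that $\mathcal R$ is a finitely generated $\mathcal A$-module, and therefore integral over $\mathcal A$. This is exactly the argument of \cite{FSInv} and \cite{DM}, and I would cite it in that form. Only the homogeneity of $\mathcal F$ matters here, which is why the first step is needed.

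Finally, I will record the resource bounds: the number of invariants, the circuit sizes, and the degree bound $\lfloor(\Delta-1)/3\rfloor$. All of them follow from Theorem~\ref{thm:separators} and Proposition~\ref{prop:degree} after the polynomial overhead incurred in the first step.
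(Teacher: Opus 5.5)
Your proposal is correct and follows the paper's proof: you split each trace coordinate of $\mathcal S_{m,N,n}$ into homogeneous components, observe that the enlarged family still separates, and use the standard fact that a finite homogeneous separating family makes the invariant ring integral. The paper cites that fact from \cite[Theorem~2.3.12]{DK}, whereas you rederive it via the null cone and graded Nakayama. The only other difference is cosmetic: the paper builds circuits for the components by carrying a degree index through the left-linear registers, at an $O(\Delta)$ overhead, while you interpolate at $tA$; both give polynomial size.
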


\begin{proof}
Split each coordinate of $\mathcal S_{m,N,n}$ into homogeneous
components in the scalar matrix entries.  The action of
$\mathrm{GL}_n$ preserves degree, so each component is invariant.
Agreement of all components implies agreement of the original
coordinates and hence of every invariant.  Thus the enlarged family
is separating.  Its constant components have the same value at every
point of the fixed-order space and may be removed.

The recurrence is left-linear: variable multiplication raises the
degree by one, and its remaining operations preserve degree.  Carrying
a degree index through its registers therefore computes all components
with an additional factor $O(\Delta)$ in circuit size and output
count.  This remains polynomial.  The invariant ring is integral over
the subalgebra generated by a finite homogeneous separating family
\cite[Theorem~2.3.12]{DK}; the form used here is also stated in
\cite[Theorem~3.9]{FSInv}. This proves the claim.
\end{proof}

\begin{remark}
The conclusion concerns orbit closures and does not assert
simultaneous similarity.  For example, a nonzero nilpotent Jordan block
and the zero matrix of the same order have equal positive-length word
traces but are not similar; their orbit closures intersect.  The
construction makes no separating-degree improvement over the bounds
of \cite{DM}.  Corollary~\ref{cor:integral} asserts integrality, without
asserting algebraic independence, a minimal homogeneous system of
parameters, or generation of the full invariant ring.
\end{remark}

\section{Recognizable series and formulas defined at zero}
\label{sec:series}

Recognizable series belong to the classical theory of weighted automata
\cite{Schutzenberger61,BR11}. Matrix-substitution criteria for common
coefficient kernels, observability, and controllability appear in
\cite{AK04}. Here the substitution is fixed from the alphabet and
realization bounds alone, with polynomial total explicit resources.

A formal noncommutative series $F\in
\field\langle\!\langle x_1,\ldots,x_m\rangle\!\rangle$ is
\emph{recognizable of realization dimension at most $N$} if there are
$q\le N$, matrices $A_i\in M_q(\field)$, and vectors
$\alpha,\beta\in\field^q$ such that
\[
 [w]F=\alpha^{\mathsf T}A_w\beta
 \quad\text{for every word }w.
\]
The empty word is included, with $A_{\varepsilon}=I_q$.

\begin{theorem}[A fixed nilpotent query for recognizable series]
\label{thm:series}
Let $m,N\ge1$ and $d=(m+2)N$, and suppose $\field$ has characteristic zero or
characteristic $p>\binom d2$.  Let $T=T_{m,N}$ be the integer tuple
of Theorem~\ref{thm:hitting}, interpreted over $\field$.  For every
recognizable series $F$ of realization dimension at most $N$, the
finite substitution
\[
 F(T)=\sum_{|w|<d}[w]F\,T_w
\]
is defined and satisfies $F(T)=0$ if and only if $F$ is the zero series.
\end{theorem}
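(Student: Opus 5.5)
The plan is to reuse the proof of Theorem~\ref{thm:hitting} almost verbatim. The only new work is to check that the finite substitution is well defined and that the duality identity \eqref{eq:pairing} holds for an infinite series.

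\textbf{Step one: the substitution is finite.} The matrices $T_i=MC_i$ are jointly strictly lower triangular with respect to the row level $k$. Every letter transition advances $k$, and $M$ does not decrease it, so any product of $d$ or more of them vanishes. Hence $T_w=0$ whenever $|w|\ge d$. The formal substitution $\sum_w [w]F\,T_w$ therefore reduces to the displayed finite sum, and $F(T)$ is defined for every series, recognizable or not.

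\textbf{Step two: the coefficient pairing.} By \eqref{eq:coefficient}, $[w]P_a=e_a^{\mathsf T}T_wb'$ for every word $w$, and this is zero for $|w|\ge d$. This agrees with the degree bound on the $P_a$. Consequently
\[
e_a^{\mathsf T}F(T)b'=\sum_{|w|<d}([w]F)([w]P_a)=\sum_w([w]F)([w]P_a).
\]
Substituting $[w]F=\alpha^{\mathsf T}A_w\beta$ and using linearity turns the right side into $\alpha^{\mathsf T}P_a(A)\beta$, because $P_a$ is a polynomial. Only words of length less than $d$ occur, so all sums are finite.

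\textbf{Step three: apply the frame.} Pad the realization to dimension $q\le N$. This is allowed because Theorem~\ref{thm:frame} holds for every $n\le N$, and the tuple $T_{m,N}$ was built with $d=(m+2)N$. Suppose $F(T)=0$. Then $\alpha^{\mathsf T}P_{k,j,0}(A)\beta=0$ for every central register. By Theorem~\ref{thm:frame}, and by Proposition~\ref{prop:characteristic} in the allowed positive characteristics, these vectors span $\calM(A;\beta)$. Hence $\alpha^{\mathsf T}A_w\beta=0$ for all words, including words of length at least $d$, so $F=0$. The converse is trivial.

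The main point needing care is that $F$ may have nonzero coefficients on arbitrarily long words that the nilpotent substitution never sees. This is handled entirely by the spanning property of the frame, since the frame spans the whole cyclic module. I expect no real obstacle beyond stating this clearly.
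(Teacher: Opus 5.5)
Your proposal is correct and follows the paper's proof essentially step for step. The level filtration gives $T_w=0$ for $|w|\ge d$, so the substitution is finite. Coefficient duality \eqref{eq:pairing} turns $e_a^{\mathsf T}F(T)b'$ into $\alpha^{\mathsf T}P_a(A)\beta$. The spanning property of the central frame (Theorem~\ref{thm:frame}, with Proposition~\ref{prop:characteristic} in positive characteristic) then forces $\alpha^{\mathsf T}A_w\beta=0$ for every word, including those of length at least $d$. Your \emph{padding} remark is unnecessary, since the frame theorem already covers every dimension $n\le N$, but it does no harm.
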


\begin{proof}
The common level filtration of the transition matrices gives
$T_w=0$ for $|w|\ge d$.  Consequently substitution in an arbitrary
formal series is a finite sum.  For each central frame output $P_a$,
the coefficient-dual identity from \eqref{eq:automaton} gives, using
its dual seed $b'$,
\[
 \begin{aligned}
 e_a^{\mathsf T}F(T)b'
 &=\sum_w([w]F)([w]P_a)\\
 &=\alpha^{\mathsf T}P_a(A)\beta.
 \end{aligned}
\]
These sums are finite because every $P_a$ has degree less than $d$.
If $F(T)=0$, then $\alpha^{\mathsf T}$ annihilates all the central
frame responses.  Theorem~\ref{thm:frame}, in its stated
characteristic range, makes them span the joint cyclic module from
$\beta$.  Thus $\alpha^{\mathsf T}A_w\beta=0$ for every word $w$.
The converse is immediate.
\end{proof}

One may equivalently express the finite substitution by
\[
 F(T)=(\alpha^{\mathsf T}\otimes I)
 \left(I-\sum_{i=1}^m A_i\otimes T_i\right)^{-1}
 (\beta\otimes I).
\]
The inverse is a finite geometric sum, since every product of $d$
transitions is zero.  This identity explains the substitution; the
black-box procedure does not obtain the hidden realization.  An
abstract formal series need not supply a matrix-evaluation oracle.
The algorithmic consequence requires an oracle allowing this jointly
nilpotent substitution.

\begin{corollary}[Rational formulas syntactically defined at zero]
\label{cor:rational}
Let $m\ge1$, let $\field$ have characteristic zero, and let $q$ be a
scalar-coefficient noncommutative rational formula whose syntax is
defined at the scalar origin.  Thus every operand of an inverse gate
has a nonzero scalar value there.  Put
\[
 \kappa(q)=\#\text{constant occurrences}
       +2\#\text{letter occurrences}
       +\#\text{inverse occurrences}.
\]
For every $N\ge\kappa(q)$, the tuple $T_{m,N}$ is in the syntactic
domain of $q$, and $q(T_{m,N})=0$ if and only if $q$ is a rational
identity.
\end{corollary}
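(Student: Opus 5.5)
The plan is to reduce Corollary~\ref{cor:rational} to Theorem~\ref{thm:series}. We show that a rational formula $q$ defined at the scalar origin has a noncommutative power series expansion at $0$ that is recognizable of realization dimension at most $\kappa(q)$. We also show that evaluating $q$ at the jointly nilpotent tuple $T=T_{m,N}$ agrees with the finite substitution $F(T)$ of that series.

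\textbf{Realizations by induction on syntax.} We build, for every subformula $g$, a realization $(\alpha_g,A^{(g)},\beta_g)$ of dimension at most $\kappa(g)$ with $[w]F_g=\alpha_g^{\mathsf T}A^{(g)}_w\beta_g$. The cases follow the weights in $\kappa$.
\begin{itemize}
\item A constant $c$ gets dimension $1$, with $A_i=0$ and $\alpha^{\mathsf T}\beta=c$.
\item A letter $x_i$ gets dimension $2$, with $A_i=e_1e_2^{\mathsf T}$.
\item A sum uses the direct sum of the two realizations.
\item A product $gh$ uses the standard block upper-triangular realization on $\field^{q_g}\oplus\field^{q_h}$. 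The coupling term $\beta_g\alpha_h^{\mathsf T}$ is put into the initial vector, or equivalently we use $A=\begin{pmatrix}A^{(g)}&\beta_g\alpha_h^{\mathsf T}A^{(h)}\\0&A^{(h)}\end{pmatrix}$. Its dimensions add, since binary gates carry no weight.
\item An inverse $g^{-1}$ requires $c=\alpha_g^{\mathsf T}\beta_g\neq0$, which is exactly the syntactic domain hypothesis. Write $F_g=c(1-G)$, where $G$ has zero constant term. Then $F_g^{-1}=c^{-1}\sum_k G^k$. This is realized on dimension $q_g+1$ by the classical star construction, modifying the transitions to $A_i'=A_i-c^{-1}\beta_g\alpha_g^{\mathsf T}A_i$ and adding one state for the constant. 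This accounts for the $+1$ per inverse occurrence.
\end{itemize}
We will verify the precise matrices against \cite[Chapter~1]{BR11}. For the corollary we only need the dimension count $\le\kappa(q)\le N$, and padding with isolated states gives exactly dimension $N$ if desired.

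\textbf{Evaluation at $T$.} By the same induction we show that $g(T)$ is defined and equals $F_g(T)=\sum_{|w|<d}[w]F_g\,T_w$. The map $F\mapsto F(T)$ from series to $M_R(\field)$ is a unital algebra homomorphism, because $T_w=0$ for $|w|\ge d$. The dependence of $d$ on $N$ is immaterial here, since the joint level filtration truncates every product. Sums and products therefore match directly. For an inverse, $g(T)=F_g(T)=c(I-G(T))$. Here $G(T)$ is strictly lower triangular, since it has no constant term and the $T_i$ are jointly strictly lower triangular. Hence $g(T)$ is invertible, so $T$ lies in the syntactic domain. Its inverse is $c^{-1}\sum_{k<R}G(T)^k=(F_g^{-1})(T)$.

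\textbf{Conclusion.} Theorem~\ref{thm:series} applies because the characteristic is zero. It gives $q(T)=0$ if and only if the series $F_q$ is zero. It remains to see that $F_q=0$ exactly when $q$ is a rational identity. One direction is the series embedding of rational functions regular at $0$: the free skew field elements defined at the origin embed injectively into $\field\langle\!\langle x\rangle\!\rangle$. In the other direction, we can avoid the skew field entirely: if $F_q=0$, then for every matrix tuple $X$ in a neighbourhood of $0$ (or any nilpotent tuple), $q(X)=F_q(X)=0$. Zero on a nonempty Zariski-open subset of each domain implies identity by irreducibility.

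\textbf{Main obstacle.} The delicate part is the bookkeeping in the inverse step. The new state must keep dimension at most $q_g+1$, and the realization must still represent the expansion about $0$ without requiring $A$ nilpotent. I would first handle this with the formula $(c-\alpha^{\mathsf T}\!\sum_i x_iA_i(I-\sum_i x_iA_i)^{-1}\beta)^{-1}$ and a rank-one update.
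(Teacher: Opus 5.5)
\textbf{Overall.} Your route is essentially the paper's, except for the last step, which needs repair. The steps match as follows:
\begin{itemize}
\item Evaluating at $T$ by induction on the syntax is the paper's induction. Your homomorphism $F\mapsto F(T)$ is the same as its ``scalar value times $I$ plus a strictly lower triangular matrix.''
\item You bound the realization dimension of the expansion at zero by $\kappa(q)$.
\item You apply Theorem~\ref{thm:series}.
\item You pass from a vanishing expansion to a rational identity.
\end{itemize}

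\textbf{Realization.} You build it by hand where the paper cites Vol\v{c}i\v{c}. This is correct and makes the weights in $\kappa$ transparent. The inverse step works with the new state $\alpha_{\rm new}=(1,0)$, off-diagonal row $-c^{-2}\alpha^{\mathsf T}A_i$, and $\beta_{\rm new}=(c^{-1},\beta)$, using your rank-one update $A_i'=(I-c^{-1}\beta\alpha^{\mathsf T})A_i$. For products you need both the coupling block and the seed component $\beta_g\alpha_h^{\mathsf T}\beta_h$, not one ``or equivalently'' the other.

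\textbf{The easy direction.} ``Rational identity $\Rightarrow q(T)=0$'' does not need the embedding of the regular part of the free skew field into $\field\langle\!\langle x\rangle\!\rangle$. Once you have shown that $T$ lies in the syntactic domain, a rational identity vanishes there by definition, and this is how the paper closes it.

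\textbf{The gap: $F_q=0\Rightarrow$ rational identity.} This step does not work as written.
\begin{itemize}
\item A ``neighbourhood of $0$'' presupposes an absolute value on $\field$, but the corollary is stated for every field of characteristic zero.
\item Your fallback ``(or any nilpotent tuple)'' fails. Jointly nilpotent tuples form a proper Zariski-closed subset, so vanishing there yields nothing by irreducibility.
\item Even over $\C$, the neighbourhood is Zariski \emph{dense} in the irreducible domain, not Zariski open.
\end{itemize}

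\textbf{Repair.} The paper uses a formal version of your idea that works over any characteristic-zero field:
\begin{itemize}
\item Substitute $tZ$, with generic $k\times k$ matrices $Z$ and a central indeterminate $t$.
\item By the same homomorphism argument, $q(tZ)$ is a matrix of rational functions whose denominators are nonzero at $t=0$, and whose $t$-expansion is $\sum_w [w]F_q\,t^{|w|}Z_w=0$.
\item Injectivity of the expansion into Laurent series gives $q(tZ)=0$.
\item Rescaling gives $q(Z)=0$ at every order, hence vanishing on the whole syntactic domain.
\end{itemize}
Alternatively, keep your analytic argument: restrict to a finitely generated subfield containing the coefficients of $q$, embed it in $\C$, and note that vanishing of the generic rational functions does not depend on the field.
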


\begin{proof}
Induct through the formula.  Every subexpression evaluated at $T$
is its scalar value at zero times $I$, plus a strictly lower
triangular matrix.  Addition and multiplication preserve this form.
At an inverse gate the scalar value is nonzero, so the evaluated
operand is invertible; the inverse is obtained by a finite geometric
sum.  The same induction shows agreement with substitution in the
formal expansion at zero.

That expansion has a realization of dimension at most $\kappa(q)$
\cite[Corollary~5.1 and Remark~5.2]{Volcic}.
Theorem~\ref{thm:series} therefore gives that $q(T)=0$ if and only
if this expansion is zero.  To see explicitly that a zero expansion
is a rational identity, take generic matrices $Z_i$ of any fixed
order and a central variable $t$.  The rational matrix $q(tZ)$ has
denominators nonzero at $t=0$; its expansion there is zero.  The
embedding of the corresponding commutative rational functions into
Laurent series at zero is injective, so $q(tZ)=0$.  Rescaling the
generic entries yields $q(Z)=0$.  This holds at every order and thus
on the syntactic matrix domain of $q$.  Conversely a rational
identity vanishes at the supplied query, which is in that domain.
\end{proof}

With formula size counting all syntax occurrences, $\kappa(q)$ is
linear in size.  This yields a polynomial-dimensional fixed query
in the exact field-arithmetic model.  A bit-cost statement additionally
requires bounds on coefficient encoding and oracle evaluation cost.
The promise is about the given syntax: regularity of the represented
function at zero alone does not ensure that its particular expression
is defined there.  For instance, $x_1^{-1}$ is undefined at every
supplied nilpotent tuple, and the inverse of a commutator has no
scalar point in its domain.  The corollary supplies no base-point
construction for unrestricted rational formulas.  General black-box
rational identity testing over infinite fields has randomized polynomial-time
\cite{DMbounds} and deterministic quasipolynomial-time \cite{ACMRIT}
algorithms; the fixed deterministic query here uses the origin-domain
promise.

\section{Rectangular compensation and alphabet dependence}\label{sec:rectangle}
The star encoding makes the core proof easy to follow. A rectangular
placement of the generators gives a smaller hitting order when the
alphabet grows. We retain the star construction and specify this
alternative separately, including the cost of its denser constant rows.

For $m,N\ge1$, put
\[
a=\lceil\sqrt m\rceil,\qquad b=\lceil m/a\rceil,\qquad
r_\square=a+b+1,\qquad d_\square=r_\square N.
\]
Let $\iota(c,q)=cb+q+1$ for $0\le c<a$, $0\le q<b$,
and set $A_{cq}=A_{\iota(c,q)}$ when $\iota(c,q)\le m$, and
$A_{cq}=0$ otherwise. On $E^{r_\square}$ use $a$ center blocks
numbered $0,\ldots,a-1$ and $b+1$ leaf blocks numbered
$a,\ldots,a+b$. Define
\begin{equation}\label{eq:rectangle-operators}
K=\begin{pmatrix}0&L\\ C&0\end{pmatrix},\qquad
L_{c,q}=A_{cq}\ (q<b),\qquad L_{c,b}=-\sum_{q<b}A_{cq},
\end{equation}
where every block of the $(b+1)\times a$ matrix $C$ is $I_E$.
Let $H=\operatorname{diag}(0I_E,I_E,\ldots,(r_\square-1)I_E)$.
The lifted seed is $\widehat v=(v,0,\ldots,0)$ in center block zero;
the designated output is leaf block $a$.

\begin{theorem}[Rectangular matrix hitting point]\label{thm:rectangle}
For this encoding, recurrence~\eqref{eq:triangle} through row
$d_\square-1$ gives a fixed complete cyclic frame when projected onto
any leaf block. In particular its designated leaf polynomials are
$Q_\square=d_\square(d_\square+1)/2$ indexed outputs, of degree at most
$\lfloor(d_\square-2)/3\rfloor$, with a shared left-linear circuit
of $O(md_\square^2)$ gates.

Taking $N=S$, their coefficient-dual construction gives fixed integer
matrices of order
\[
R_\square=r_\square Q_\square
 =O(m^{3/2}S^2)
\]
which hit all affine ncABPs with at most $S$ vertices over characteristic
zero, and over characteristic $p>\binom{d_\square}{2}$.
They are jointly strictly lower triangular, their entries have
$O(d_\square\log(d_\square+1))$ bits, and their explicit construction
uses $O(mR_\square^2)$ bounded-height integer operations. The complete
query description and returned matrix have $mR_\square^2$ and
$R_\square^2$ entries, respectively. The rational bit-cost conclusion
of Theorem~\ref{thm:hitting} holds with these parameters as well.
\end{theorem}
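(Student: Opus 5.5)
I will run the star-case argument of Sections~\ref{sec:frame}--\ref{sec:resources} again for the rectangular pair $(H,K)$ of \eqref{eq:rectangle-operators}. Only the module identity \eqref{eq:modulecopy} and the degree count need new work. First I will check the hypotheses of Lemma~\ref{lem:rigidity}. The tags $0,\ldots,r_\square-1$ are distinct, also in characteristic $p>\binom{d_\square}2$, since $d_\square\ge3$. $K$ is off-diagonal between the center and leaf groups, so every same-tag block vanishes. Finally, $LC$ has $(c,c')$ block $\sum_{q<b}L_{c,q}+L_{c,b}=0$ for all $c,c'$, because each column of $C$ consists of identities. Hence $K^3=CLC\cdot L\ldots=0$; more precisely $K^2=\operatorname{diag}(LC,CL)$ and $K^3=\begin{pmatrix}0&LCL\\CLC&0\end{pmatrix}=0$. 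Lemma~\ref{lem:triangle} then applies on $E^{r_\square}$ of dimension $r_\square n\le d_\square$. So rows $0,\ldots,d_\square-1$ of \eqref{eq:triangle} span $\calM(H,K;\widehat v)$, and in positive characteristic the degree bound of Proposition~\ref{prop:characteristic} is unchanged with $d_\square$ in place of $d$.

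Next, the analogue of \eqref{eq:modulecopy}: I will show $\calM(H,K;\widehat v)=\calM(A;v)^{r_\square}$. The inclusion $\subseteq$ is clear. For $\supseteq$, the block projections $E_j$ are Lagrange polynomials in $H$. The operator $E_{a+q}KE_c$ copies center block $c$ to leaf $a+q$. The operator $E_{c'}KE_{a+q}KE_{c}$ applies $L_{c',q}=A_{c'q}$ and sends center $c$ to center $c'$. Starting from block $0$, I will apply $A_{i_1}$ by routing into the center $c_1$ that carries $i_1$, then the next letter through its own center, and so on. The last step moves to any desired center with a copy, and a further copy reaches any leaf. Since each $A_i$ appears as some $A_{cq}$ with $q<b$, every word response, and in particular $v$ itself, lands in every block. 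This proves the identity, and projecting onto any leaf block (the designated leaf $a$) gives the complete frame.

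Then I will compile \eqref{eq:triangle} into block-component registers, as in \eqref{eq:component}. A center register $c$ receives $\sum_q A_{cq}(P_{\text{leaf }q}-P_{\text{leaf }b})$ plus a diagonal term. A leaf register receives $\sum_c P_{\text{center }c}$ plus the diagonal and $H$ terms. Summed over all registers of a row, this is $O(ab+r_\square)=O(m)$ work (here the dense constant rows cost $O(a)$ per leaf), so the circuit has $O(md_\square^2)$ gates. For degrees, I will redo Proposition~\ref{prop:degree} with three register types: centers of degree $\le\lfloor k/3\rfloor$, leaves $\le\lfloor(k-1)/3\rfloor$, and leaf differences $D$ to leaf $b$ of degree $\le\lfloor(k-2)/3\rfloor$. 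The common central sum cancels in $D$, exactly as before. Leaf outputs therefore have degree $\le\lfloor(d_\square-2)/3\rfloor$.

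Finally, Section~\ref{sec:duality} transfers verbatim. The letter coefficients sit only in center rows, all dependencies advance $k$, and so $T_i=MC_i$ and $b'$ are strictly triangular, with order equal to the register count $r_\square Q_\square$. The pairing \eqref{eq:pairing} with the leaf-$a$ readouts together with \eqref{eq:abpcoefficient} gives the hitting property. The main obstacle is the height bound: a leaf row of $C_0$ now has $a+2$ predecessors, so $\|C_0\|_\infty\le(a+2)d_\square$. This still gives entries of size at most $2d_\square((a+2)d_\square)^{d_\square-1}$, which is $O(d_\square\log(d_\square+1))$ bits because $a\le d_\square$. The row-by-row solve costs $O(R_\square)$ per row, using shared partial sums of the center rows as in the promised shared-row implementation, giving $O(mR_\square^2)$ in total. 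The rational bit-cost argument of Section~\ref{sec:resources} is then unchanged. Since $r_\square=O(\sqrt m)$, we get $R_\square=O(m^{3/2}S^2)$.
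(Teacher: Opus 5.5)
Your overall route matches the paper's. You derive the rigidity hypotheses from $LC=0$ and distinct tags, apply Lemma~\ref{lem:triangle} on $E^{r_\square}$, and recover the cyclic module through block projections. You then use difference registers for the degree bound, and finish with coefficient duality and shared center-row sums. However, one step you state is false, and your compiled recurrence omits a term.

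\textbf{The claimed module identity.} The identity $\calM(H,K;\widehat v)=\calM(A;v)^{r_\square}$ fails whenever $a\ge2$, i.e.\ $m\ge2$. The identity blocks of $K$ all go from centers to leaves, and every leaf-to-center block is some $L_{c,q}$. So nothing copies a vector into a center, and ``the last step moves to any desired center with a copy'' is unavailable. Write $W=\calM(H,K;\widehat v)=\bigoplus_tE_tW$ and $M=\calM(A;v)$. Every leaf component of $W$ equals $M$. But center $c\ge1$ is only $\sum_{q<b}A_{cq}M$, and center $0$ is $\field v+\sum_{q<b}A_{0q}M$. For example, if all $A_i=0$ and $v\ne0$, then $W$ is spanned by $\widehat v$ and the leaf copies of $v$, and $v$ never reaches center $1$.

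The damage is local. Your routing argument ends with a center-to-leaf copy $E_tKE_c$, or $E_tKE_0\widehat v$ for the empty word. It therefore does prove that each leaf projection of $W$ is $M$, and that is all the frame and hitting arguments use. This is exactly \eqref{eq:rectangle-projection}. The paper proves only this and notes that no equality $W=M^{r_\square}$ is needed, so you should replace the false identity by this statement.

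\textbf{The missing center tag term.} You give a center register only the letter term and ``a diagonal term'', reserving the $H$ term for leaves. That is the star pattern, where the unique center has tag $0$. Here center $c$ has tag $c$, so compiling \eqref{eq:triangle} faithfully produces the extra term $cC_{k,j-1,c}$ of \eqref{eq:rectangle-components}. Without it, the registers compute $U_{k,j}(H',K)\widehat v$ for an $H'$ that gives every center tag $0$. The individual $E_c$ are then not polynomials in $H'$. The leaf outputs span only $\calM(B;v)$ with $B_q=\sum_cA_{cq}$ for $q<b$, in general a proper submodule of $M$; for $m=2$ the leaves see only $A_1+A_2$.

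Restoring the term leaves your bounds intact. It has degree at most $\lfloor k/3\rfloor$ and costs $O(a)$ gates per $(k,j)$ cluster. It adds at most $a$ to a center row sum of $C_0$, which your estimate $\|C_0\|_\infty\le(a+2)d_\square$ already absorbs.
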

\begin{proof}
Each block of $LC$ is a row sum of $L$ and is zero. Thus $K^3=0$.
Every equal-tag block of $K$ is zero. The tag-rigidity argument therefore
applies in characteristic zero; its bounded-characteristic cyclic-space
proof in Section~\ref{sec:characteristic} also applies with
$d_\square$ in place of $d$. In particular the characteristic bound
keeps all integer tags distinct and bounds the primitive exterior degree
strictly below $p$.

Write $M=\calM(A;v)$ and let $W=\calM(H,K;\widehat v)$.
We have $W\subseteq M^{r_\square}$ since the latter is invariant
and contains the seed. Let $E_t$ be the projection onto block $t$;
it is a polynomial in $H$ by interpolation at its distinct fixed tags.
For every leaf $t$, the vector $E_tKE_0\widehat v$ is $v$ supported
in leaf $t$. If $A_wv$ is available separately in every leaf and
$A_i=A_{cq}$, then $E_cKE_{a+q}$ applies $A_i$ to that vector
and puts the result in center $c$. Applying $E_tKE_c$ copies the
result into any prescribed leaf $t$. Induction over words consequently
shows
\begin{equation}\label{eq:rectangle-projection}
\text{projection of }W\text{ onto leaf }t=M\qquad(a\le t<a+b+1).
\end{equation}
The induction builds each word from its last letter toward its first,
as required for $A_wv$. No equality $W=M^{r_\square}$ is needed.
Lemma~\ref{lem:triangle} now makes the coefficients of the first
$d_\square$ differential Krylov vectors span $W$, and their leaf
projections span $M$.

Here is the polynomial recurrence which also specifies all query entries.
Use center components $C_{k,j,c}$ and leaf components $L_{k,j,q}$,
where $0\le c<a$ and $0\le q\le b$. Put $C_{0,0,0}=1$ and
all other row-zero components equal to zero. Terms outside their
triangular ranges are zero. For every subsequent row define
\begin{align}\label{eq:rectangle-components}
C_{k+1,j,c}&=
 \sum_{\substack{0\le q<b\\\iota(c,q)\le m}}
 x_{\iota(c,q)}(L_{k,j,q}-L_{k,j,b})
 +(j-k)C_{k,j,c}+cC_{k,j-1,c},\notag\\
L_{k+1,j,q}&=\sum_{c=0}^{a-1}C_{k,j,c}
 +(j-k)L_{k,j,q}+(a+q)L_{k,j-1,q}.
\end{align}
The center tag term $cC_{k,j-1,c}$ includes the nonzero center tags;
it is essential when $a>1$. There are $m$ data slots in total.
Sharing the sum of center components across the leaf updates gives
$O(m)$ gates per $(k,j)$ cluster, since $r_\square=O(m)$.
This proves the circuit bound.

For the degree bound put $D_{k,j,q}=L_{k,j,q}-L_{k,j,b}$ for
$q<b$. The common center sum cancels and gives
\[
D_{k+1,j,q}=(j-k)D_{k,j,q}
 +(a+q)L_{k,j-1,q}-(a+b)L_{k,j-1,b}.
\]
An induction, starting from the given row-zero components and using
$\deg0=-\infty$, yields simultaneously
\[
\deg C_{k,j,c}\le\lfloor k/3\rfloor,\qquad
\deg L_{k,j,q}\le\lfloor(k-1)/3\rfloor,\qquad
\deg D_{k,j,q}\le\lfloor(k-2)/3\rfloor.
\]
Indeed the only degree-increasing term in a center update is $x_iD$,
whose bound is $1+\lfloor(k-2)/3\rfloor=\lfloor(k+1)/3\rfloor$;
a leaf update has bound $\lfloor k/3\rfloor$, and a difference update
has bound $\lfloor(k-1)/3\rfloor$. A left-linear circuit that stores
the differences as separate registers certifies these bounds
syntactically with the same $O(md_\square^2)$ gate count. The emitter
may retain the original components: the sharper degree concerns their
polynomials, independently of a degree counter on an unsimplified DAG.

Retain every component register in lexicographic order $(k,j,t)$.
As in~\eqref{eq:automaton}--\eqref{eq:emit}, the constant matrix
$C_0$ and the letter matrices $C_i$ advance row level, so
$C_0^{d_\square}=0$ and
\[
T_i=(I-C_0)^{-1}C_i,\qquad \eta=(I-C_0)^{-1}b_0
\]
are fixed integer objects. Here $b_0$ is the initial register vector.
A constant center row has at most two predecessors. A constant leaf row
has the $a$ center predecessors together with its two tag and diagonal
predecessors. Nevertheless
\[
\|C_0\|_\infty\le a+2d_\square\le3d_\square,
\qquad \|C_i\|_\infty\le2.
\]
The finite geometric-sum argument bounds every query or seed entry by
$2d_\square(3d_\square)^{d_\square-1}$, with the stated bit length.

For clarity, direct separate addition of the $a$ center predecessor rows
for each leaf would not give the asserted construction count.
Instead, for each letter matrix being materialized and each $(k,j)$,
first form the single row vector
\[
\sigma_{k,j}=\sum_{c<a}(T_i)_{(k,j,c),*}.
\]
Each leaf in the next-row cluster uses this same vector, together with
its at most two own predecessors. Forming the shared sum and all center
and leaf rows costs $O(r_\square R_\square)$ integer operations per
cluster. There are $Q_\square$ clusters. Thus each letter costs
$O(r_\square Q_\square R_\square)=O(R_\square^2)$, and all letters
cost $O(mR_\square^2)$. The seed uses the analogous scalar recurrence.
The absolute-sum height argument also bounds intermediate shared sums
and partial row additions by the same order of height.

Coefficient duality~\eqref{eq:coefficient}--\eqref{eq:pairing} applies
to these registers. The designated leaf polynomial in row one is $1$,
so it retains the empty-word coefficient. Using
\eqref{eq:rectangle-projection} for the hidden affine ABP realization
now proves the hitting statement exactly as for the star. The rational
path-height accounting uses only polynomial matrix order and query-entry
height, and therefore carries over unchanged.
\end{proof}

For example, at $(m,S)=(4,2)$ the rectangular order is $275$,
whereas the star order is $468$. At $(5,2)$ the respective orders are
$468$ and $735$. These are improvements in the alphabet parameter;
for every fixed alphabet both constructions retain quadratic order in $S$.
The accompanying rectangular emitter uses the shared-center-row algorithm.
Independent exact tests cover $56$ arbitrary lifted-frame cases over
$\Q$ and allowed prime fields, all $176$ leaf projections in those
cases, $99\,083$ scalar coefficient equalities, $1\,022$ component and
$347$ difference degree checks, and $24$ comparisons with a naive full
triangular solve. A complete commutator evaluation at order $1050$
is also nonzero. These are finite checks, distinct from the proof and
from Lean verification.

\begin{proposition}[Elementary single-query order lower bound]\label{prop:query-lower}
Let $m,S\ge1$ and let $\field$ be the coefficient field. Any one
fixed tuple $T_1,\ldots,T_m\in M_R(\field)$ hitting every affine
ncABP over $\field$ with at most $S$ vertices must have $R\ge S$.
\end{proposition}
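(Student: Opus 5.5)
We argue by contradiction via an annihilating-polynomial argument built from the Cayley--Hamilton theorem. Suppose $R<S$, and put $D=R$.

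\textbf{The annihilating polynomial.} Consider the univariate polynomial
\[
\chi(x_1)=\det(xI-T_1)\big|_{x=x_1}=x_1^{R}+c_{R-1}x_1^{R-1}+\cdots+c_0,
\]
with coefficients $c_i\in\field$. This is a nonzero noncommutative polynomial in the single letter $x_1$, since it is monic of degree $R$. By Cayley--Hamilton, $\chi(T_1)=0$.

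\textbf{An ABP for $\chi$ with at most $S$ vertices.} We use the Horner form
\[
\chi=(\cdots((x_1+c_{R-1})x_1+c_{R-2})x_1+\cdots)x_1+c_0 .
\]
The simplest realization is a path on vertices $0,1,\ldots,R$ with every edge labeled $x_1$, plus an edge from vertex $i$ to the sink (vertex $R$) labeled by a constant. To make this precise:

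\begin{itemize}
\item Take vertices $v_0,\ldots,v_R$, with source $v_0$ and sink $v_R$.
\item Place an edge $v_{i}\to v_{i+1}$ labeled $x_1$ for each $i$.
\item Encode each constant $c_k$ on a skip edge $v_{R-k}\to v_R$ labeled $c_k$, for $k<R$.
\end{itemize}

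Source--sink paths then produce the monomial $x_1^R$ together with $x_1^{R-k}c_k$, which is exactly $\chi$. If a skip edge lands on the sink from $v_{R-1}$, it is parallel to the $x_1$ edge there, and we merge the two into the single label $x_1+c_1$, as the paper permits. The graph is acyclic, its labels are affine, and it has $R+1$ vertices. Since $R<S$ gives $R+1\le S$, this is an ABP with at most $S$ vertices.

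\textbf{Conclusion.} By construction $\chi\not\equiv0$, yet $\chi(T_1,\ldots,T_m)=\chi(T_1)=0$. This contradicts the hitting property, so $R\ge S$.

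\textbf{Checks and edge cases.} The main step to verify is the vertex count, $R+1\le S$, which is exactly where the hypothesis $R<S$ enters. Two edge cases remain:

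\begin{itemize}
\item For $R=0$ the tuple is empty, and every polynomial evaluates to the empty $0\times0$ matrix. The nonzero constant $1$, which has a one-vertex ABP (source equal to sink, empty-word coefficient), is therefore not hit.
\item If instead one wants the count to match exactly, one can take $x_1^{R}$ minus a lower-degree correction; either way the Cayley--Hamilton polynomial suffices.
\end{itemize}

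The argument uses only the first matrix and works over any field, so the bound holds for every $m\ge1$.
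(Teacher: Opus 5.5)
Your strategy is the same as the paper's: apply Cayley--Hamilton to $T_1$, and realize its monic characteristic polynomial by an $(R+1)$-vertex chain of $x_1$-edges plus constant shortcut edges into the sink. Then $R<S$ yields a nonzero polynomial with an ABP of at most $S$ vertices that vanishes at the tuple. However, the shortcut edges are mis-indexed as written, so the ABP you specify does not compute $\chi$.

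\textbf{The indexing error.} A shortcut $v_{R-k}\to v_R$ labeled $c_k$ contributes the path product $x_1^{R-k}c_k$. This puts $c_k$ on the monomial of degree $R-k$, whereas in $\chi$ it multiplies $x_1^{k}$. Moreover, for $k=0$ your edge is $v_R\to v_R$, a self-loop at the sink, so the graph is not acyclic. For example, with $R=3$ and the loop discarded, your graph computes $x_1^3+c_1x_1^2+c_2x_1$. This need not vanish at $T_1$: for $T_1=\operatorname{diag}(1,2,3)$ it equals $6$ at $x_1=1$.

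\textbf{The repair.} Put the shortcut labeled $c_k$ on the edge $v_k\to v_R$ for $0\le k\le R-1$. Merge the case $k=R-1$ with the chain edge into the label $x_1+c_{R-1}$, not $x_1+c_1$. Then the path through $v_k$ contributes $x_1^kc_k$, and the path sum is exactly $\chi$. This is precisely the paper's graph: chain edges labeled $x_1$, final edge labeled $h_Rx_1+h_{R-1}$, and shortcuts $i\to R$ labeled $h_i$. Alternatively, you may keep your index $v_{R-k}$ but send the shortcuts out of the source, $v_0\to v_{R-k}$ labeled $c_k$; that is the graph your Horner form actually describes. With either correction, the rest of your argument goes through, including the treatment of $R=0$ via the constant $1$.
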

\begin{proof}
At order zero, the constant polynomial $1$ already evaluates to the
zero matrix, so assume $R\ge1$.
If its order is $R<S$, the first matrix has monic characteristic polynomial
$h(t)=\sum_{i=0}^R h_it^i\in\field[t]$, which annihilates it.
This polynomial has an affine ABP on $R+1$ vertices: on vertices
$0,\ldots,R$, use the chain edges $i\to i+1$ labeled $x_1$ for
$0\le i<R-1$, the final edge $(R-1)\to R$ labeled
$h_Rx_1+h_{R-1}$, and shortcuts $i\to R$ labeled $h_i$ for
$0\le i<R-1$. The source is zero and the sink is $R$.
Its path sum is the nonzero polynomial $h(x_1)$, which vanishes at
the proposed tuple. Since $R+1\le S$, this contradicts the hitting property.
\end{proof}
This lower bound does not match the quadratic dependence of our upper
bound and entails no optimality claim. It concerns queries over the
coefficient field. Extension-field queries can have smaller order:
the scalar $[\sqrt2]$ hits every nonzero rational-coefficient affine
polynomial in one letter, although such polynomials have two-vertex ABPs.

\section{Nonadaptive output-only reconstruction}\label{app:reconstruction}\label{sec:reconstruction}

The fixed frame also supplies complete finite Hankel data through ordinary
matrix evaluations.  Throughout this appendix we use the star frame of
Theorem~\ref{thm:frame}, with
\[
 r=m+2,\quad d=rS,\quad Q=d(d+1)/2,\quad R=rQ.
\]
Index its central polynomials by $P_a$, $0\le a<Q$, with $P_0=1$.
Write $\rho(a)$ for the corresponding component-register index, and put
$\eta=b'$.  Equation~\ref{eq:coefficient} states
\[
 [w]P_a=e_{\rho(a)}^{\mathsf T}T_w\eta.
\]
For $P$, let $P^{\mathrm{rev}}$ reverse each word, retaining its scalar
coefficient.  Thus $P^{\mathrm{rev}}(A)^{\mathsf T}=P(A^{\mathsf T})$;
the transpose here involves no conjugation.

\begin{theorem}[Fixed-query ABP reconstruction]\label{thm:reconstruction}
Let $m,S\ge1$.  Over every field of characteristic zero or characteristic
$p>\binom d2$, an unknown polynomial computed by an affine ncABP with at
most $S$ vertices can be reconstructed deterministically from output-only
matrix evaluations.  All queries are fixed by $(m,S)$ before any answer
is received.  The algorithm returns a minimal linear representation and
an equivalent acyclic ABP with at most $S+2$ vertices.

There are $(m+1)Q^2$ queries of order $2R$, with integer entries of
$O(d\log(d+1))$ bits.  Query generation, the complete returned data, and
postprocessing use polynomial resources.  Over $\Q$, bit complexity is
polynomial in these parameters and the merged edge-description length of
the promised ABP.
\end{theorem}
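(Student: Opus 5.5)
We apply the frame on both sides. Let $f$ have the realization \eqref{eq:abpcoefficient}, $[w]f=\alpha^{\mathsf T}A_w\beta$ on $\field^S$. By Theorem~\ref{thm:frame}, the column vectors $P_a(A)\beta$ span the reachable space $\calM(A;\beta)$. The same theorem applied to the transposed tuple $A^{\mathsf T}$ with seed $\alpha$ shows that the vectors $P_b(A^{\mathsf T})\alpha=(\alpha^{\mathsf T}P_b^{\mathrm{rev}}(A))^{\mathsf T}$ span the observable space. The target data are the finite Hankel-type entries
\[
h_{b,a}=\alpha^{\mathsf T}P_b^{\mathrm{rev}}(A)P_a(A)\beta,\qquad
h^{(i)}_{b,a}=\alpha^{\mathsf T}P_b^{\mathrm{rev}}(A)A_iP_a(A)\beta,
\]
for $0\le a,b<Q$ and $1\le i\le m$. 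This gives $(m+1)Q^2$ scalars, matching the query count. The rows (observable functionals) and columns (reachable vectors) span the full observable and reachable spaces. Standard minimal Hankel realization \cite[Chapter~2]{BR11} then works as follows. Take a maximal nonsingular minor of $(h_{b,a})$. Its rank $q\le S$ equals the minimal dimension. Solve for $A'_i$ from the $h^{(i)}$ blocks, and read off $\alpha',\beta'$ from the row $b=0$ and the column $a=0$, which works because $P_0=1$. This yields a minimal representation equivalent to $f$. All basis selection is Gaussian elimination on returned data, so it is postprocessing only.

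The main step is to realize each scalar $h_{b,a}$ or $h^{(i)}_{b,a}$ as an entry of $f$ evaluated at a fixed matrix tuple of order $2R$. The plan is to build a two-block automaton for the product language. Take the first block to be a left copy of the frame automaton for $P_b^{\mathrm{rev}}$ and the second the right copy $(T_i,\eta)$ for $P_a$. Each query is an upper block-triangular tuple
\[
X_j=\begin{pmatrix}T^{\flat}_j&\delta_{ij}\,u\,e_{\rho(a)}^{\mathsf T}\\0&T_j\end{pmatrix},
\]
where $u$ is the dual seed of the reversed automaton. Here $T^\flat_j$ is the transposed frame automaton, so that the coefficients of $P_b^{\mathrm{rev}}$ are $\eta^{\mathsf T}T^{\flat}_{w}\,e_{\rho(b)}$ read in reversed order. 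A correct choice of $T^\flat_j$ (essentially $T_j^{\mathsf T}$) must be fixed by checking word order, as in \eqref{eq:coefficient}.

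Expanding $f(X)$ along words, the upper-right block picks up exactly the words that pass the single off-diagonal transition once. Reading the fixed entry $e_{\rho(b)}^{\mathsf T}(\cdot)\eta$ of that block then gives $\sum_w([w]f)([w](P_b^{\mathrm{rev}}x_iP_a))$. By the pairing argument \eqref{eq:pairing}, applied now to the product polynomial, this equals $\alpha^{\mathsf T}P_b^{\mathrm{rev}}(A)A_iP_a(A)\beta=h^{(i)}_{b,a}$. For the case without a letter ($i=0$, giving $h_{b,a}$), I would use a constant off-diagonal coupling that bridges the two automata without consuming a letter. This coupling is an $\eps$-transition and has to be eliminated as in \eqref{eq:emit}. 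Alternatively, the letter and the empty-word readout can both be handled by placing the coupling as a constant block inside $(I-C_0)^{-1}$. I expect this bookkeeping to be the main obstacle: getting word order right under reversal, handling the empty word, and keeping the result a genuine substitution rather than a coefficient oracle. Joint strict block triangularity guarantees nilpotence, so all sums are finite.

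The remaining assertions are routine. The query entries inherit the height bound \eqref{eq:height}. The answers over $\Q$ follow the rational accounting of Section~\ref{sec:resources} at order $2R$. Finally, I would convert the recovered minimal representation $(\alpha',A',\beta')$ of dimension $q\le S$ into an acyclic ABP with at most $S+2$ vertices. Because $f$ is a polynomial, the recovered $A'_i$ generate a nilpotent algebra, so I would change basis to make them jointly strictly upper triangular. The $q$ states then become vertices, with an added source and sink carrying the $\alpha'$ and $\beta'$ labels.
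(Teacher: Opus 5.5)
Your route is the paper's: the frame applied to $(A,\beta)$ and to $(A^{\mathsf T},\alpha)$, block upper-triangular queries of order $2R$ with $T^{\mathsf T}$ and $T$ on the diagonal and a single crossing, and minimal Hankel realization with $P_0=1$ supplying the boundary vectors. The acyclic ABP then comes from a strictly triangular flag. However, the explicit query you wrote computes the wrong scalars, and this is the step you flagged as the main obstacle.

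\textbf{The letter query.} With $T^\flat_j=T_j^{\mathsf T}$, the upper-right block of $X_w$ is $\sum_{w=s\,i\,t}(T^{\mathsf T})_s\,u\,e_{\rho(a)}^{\mathsf T}T_t$. Your readout $e_{\rho(b)}^{\mathsf T}(\cdot)\eta$ then gives the left factor
\[
e_{\rho(b)}^{\mathsf T}(T^{\mathsf T})_s\,u=u^{\mathsf T}T_{\mathrm{rev}(s)}\,e_{\rho(b)}.
\]
This pairs $u$ with column $\rho(b)$ of $T_{\mathrm{rev}(s)}$. The frame coefficient you need instead pairs row $\rho(b)$ with $\eta$:
\[
[s]P_b^{\mathrm{rev}}=e_{\rho(b)}^{\mathsf T}T_{\mathrm{rev}(s)}\eta=\eta^{\mathsf T}(T^{\mathsf T})_s\,e_{\rho(b)}.
\]
Your own formula $\eta^{\mathsf T}T^\flat_w e_{\rho(b)}$ already shows the fix. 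For the transposed automaton, the register $\rho(b)$ must sit inside the coupling, and the fixed left readout must be $\eta^{\mathsf T}$. So the coupling should be $\delta_{ij}\,e_{\rho(b)}e_{\rho(a)}^{\mathsf T}$, read by $(\eta^{\mathsf T},0)$ on the left and $\binom{0}{\eta}$ on the right. This is the paper's $Y^{a,b,j}$, with $a$ and $b$ renamed. Both indices enter the coupling, which is why each letter costs $Q^2$ queries rather than the $Q$ your design suggests. Your right factor $e_{\rho(a)}^{\mathsf T}T_t\eta=[t]P_a$ is already correct.

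\textbf{The letterless entries.} For $h_{b,a}$, your $\eps$-elimination is the right idea, and carrying it out reproduces the paper's construction. Put $E=e_{\rho(b)}e_{\rho(a)}^{\mathsf T}$ and take constant part $C_0=\left(\begin{smallmatrix}0&E\\0&0\end{smallmatrix}\right)$. Then $(I-C_0)^{-1}=I+C_0$, so the query matrices become $\left(\begin{smallmatrix}T_i^{\mathsf T}&ET_i\\0&T_i\end{smallmatrix}\right)$ and the right readout becomes $\binom{E\eta}{\eta}$. A crossing at a letter makes that letter the first letter of the suffix. The component $E\eta$ supplies the empty-suffix terms, including the empty word.

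\textbf{Nilpotence of the recovered operators.} The recovered $\widetilde A_i$ are nilpotent because the representation is minimal, not merely because $f$ is a polynomial. A non-minimal representation of a polynomial can carry a nonnilpotent unreachable part. The paper's argument: every pairing of $\widetilde A_w$ with an observable row and a reachable column is a coefficient $[uwv]f$. That coefficient vanishes for $|w|\ge S$, so $\widetilde A_w=0$.
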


\subsection{The block queries, including their constant terms}
For $a,b<Q$, put $E_{ab}=e_{\rho(a)}e_{\rho(b)}^{\mathsf T}$ and define
\begin{equation}\label{eq:reconstruction-queries}
 X_i^{a,b}=\begin{pmatrix}T_i^{\mathsf T}&E_{ab}T_i\\0&T_i\end{pmatrix},
 \qquad
 Y_i^{a,b,j}=\begin{pmatrix}T_i^{\mathsf T}&\mathbf1_{i=j}E_{ab}\\0&T_i\end{pmatrix}.
\end{equation}
Use the fixed readout vectors
\[
 u^{\mathsf T}=(\eta^{\mathsf T},0),\quad
 v_{ab}=\binom{E_{ab}\eta}{\eta},\quad v_0=\binom0\eta.
\]
Every matrix and vector in these expressions is known to the algorithm.
It transposes its own query matrices, without requesting an operation on
the unknown polynomial or its hidden representation.

\begin{lemma}[Two-sided data from actual evaluations]\label{lem:reconstruction-queries}
For every polynomial $f$, the full output matrices satisfy
\begin{align}
 u^{\mathsf T}f(X^{a,b})v_{ab}
 &=\sum_{w=st}([w]f)([s]P_a^{\mathrm{rev}})([t]P_b),\label{eq:reconstruction-g}\\
 u^{\mathsf T}f(Y^{a,b,j})v_0
 &=\sum_{w=sjt}([w]f)([s]P_a^{\mathrm{rev}})([t]P_b).\label{eq:reconstruction-gi}
\end{align}
The first sum includes empty prefixes, empty suffixes, and the empty word.
These identities also hold for recognizable series with nilpotent
matrix-evaluation access.
\end{lemma}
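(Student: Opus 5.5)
By linearity in $f$, it suffices to prove both identities for a single word $w=i_1\cdots i_k$, that is, for the monomial $f=x_{i_1}\cdots x_{i_k}$. The empty word is included, and since products of $d$ transitions vanish, the recognizable-series case follows by the same finite summation. The plan is to expand the word product of block upper triangular matrices and then read off each piece with \eqref{eq:coefficient}.

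For $X^{a,b}$, write $X_i=\left(\begin{smallmatrix}T_i^{\mathsf T}&E_{ab}T_i\\0&T_i\end{smallmatrix}\right)$. The product $X_{i_1}\cdots X_{i_k}$ is again block upper triangular. Its diagonal blocks are $T_{i_1}^{\mathsf T}\cdots T_{i_k}^{\mathsf T}$ and $T_w$, and its off-diagonal block is
\[
\sum_{p=1}^{k}T_{i_1}^{\mathsf T}\cdots T_{i_{p-1}}^{\mathsf T}\,E_{ab}T_{i_p}\,T_{i_{p+1}}\cdots T_{i_k}.
\]
Pair this with $u^{\mathsf T}=(\eta^{\mathsf T},0)$ on the left and $v_{ab}=(E_{ab}\eta;\eta)$ on the right. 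The top-left block contributes $\eta^{\mathsf T}T_{i_1}^{\mathsf T}\cdots T_{i_k}^{\mathsf T}E_{ab}\eta$, and the off-diagonal block contributes $\sum_p\eta^{\mathsf T}(T_{i_{p-1}}\cdots T_{i_1})^{\mathsf T}E_{ab}T_{i_p}\cdots T_{i_k}\eta$.

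Now use $E_{ab}=e_{\rho(a)}e_{\rho(b)}^{\mathsf T}$ to factor each term. The left factor is $e_{\rho(a)}^{\mathsf T}T_{i_{p-1}}\cdots T_{i_1}\eta=[i_{p-1}\cdots i_1]P_a=[i_1\cdots i_{p-1}]P_a^{\mathrm{rev}}$. The right factor is $e_{\rho(b)}^{\mathsf T}T_{i_p}\cdots T_{i_k}\eta=[i_p\cdots i_k]P_b$. So the $p$-th term is the split $s=i_1\cdots i_{p-1}$, $t=i_p\cdots i_k$ with $t$ nonempty, and the top-left term is exactly the split $s=w$, $t=\varepsilon$. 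Together these give all $k+1$ splittings, which is \eqref{eq:reconstruction-g}. For the empty word, $f=1$ gives $u^{\mathsf T}v_{ab}=\eta^{\mathsf T}E_{ab}\eta=([\varepsilon]P_a)([\varepsilon]P_b)$, as required.

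For $Y^{a,b,j}$, the off-diagonal block of the word product is
\[
\sum_{p:\,i_p=j}T_{i_1}^{\mathsf T}\cdots T_{i_{p-1}}^{\mathsf T}E_{ab}T_{i_{p+1}}\cdots T_{i_k}.
\]
Since $v_0=(0;\eta)$, the top-left block is killed and only this block survives the readout. The same factorization then gives $\sum_{w=sjt}([s]P_a^{\mathrm{rev}})([t]P_b)$, which is \eqref{eq:reconstruction-gi}. The only real obstacle is bookkeeping: the reversal (transposes reverse the order of the prefix letters) and the empty-prefix and empty-suffix boundary terms must be tracked carefully. Everything else is the standard expansion of a triangular block product.
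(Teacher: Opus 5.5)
Your proposal is correct and is essentially the paper's proof. Both expand the block upper-triangular word product and locate the single crossing ($E_{ab}T_{i_p}$ for $X$, the distinguished letter $j$ for $Y$), let the upper component $E_{ab}\eta$ of $v_{ab}$ supply the empty-suffix term, and factor through $E_{ab}=e_{\rho(a)}e_{\rho(b)}^{\mathsf T}$ using the transpose reversal and \eqref{eq:coefficient}. One small correction for the series case: it is products of $2d-1$ block query matrices, not $d$, that vanish. Your off-diagonal expansion shows this at once, since every term then contains a prefix or suffix product of length at least $d$.
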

\begin{proof}
In the upper-right block of $X_w$, a product crosses from the upper block
to the lower one exactly once.  A crossing at a letter uses $E_{ab}T_i$,
so that letter is the first letter of the suffix.  The additional upper
component $E_{ab}\eta$ in $v_{ab}$ supplies the suffix-empty term.
Consequently the contraction is
\[
 \sum_{w=st}([w]f)\eta^{\mathsf T}(T^{\mathsf T})_s E_{ab}T_t\eta.
\]
For the word-order product of transposes,
$(T^{\mathsf T})_s=(T_{\mathrm{rev}(s)})^{\mathsf T}$.  The coefficient
identity therefore makes its two factors $[s]P_a^{\mathrm{rev}}$ and
$[t]P_b$.  At $w=\eps$ the readout is
$\eta^{\mathsf T}E_{ab}\eta$, which gives precisely the constant term
on the right.  This proves~\eqref{eq:reconstruction-g}.

For $Y$, the unique crossing reads the distinguished letter $j$ and
contributes $E_{ab}$, giving~\eqref{eq:reconstruction-gi}.  Its
empty-word readout is zero.  The diagonal block tuples have every
length-$d$ product zero, and a product can cross blocks at most once.
Every product of $2d$ query matrices is consequently zero.  Formal-series
substitution and both coefficient sums are thus finite.
\end{proof}

\subsection{Minimal realization and an acyclic output}
\begin{proof}[Proof of Theorem~\ref{thm:reconstruction}]
Use the hidden coefficient representation $[w]f=\alpha^{\mathsf T}A_w\beta$
of dimension at most $S$ solely in the correctness proof.  The lemma
obtains all entries of
\[
 G_{ab}=\alpha^{\mathsf T}P_a^{\mathrm{rev}}(A)P_b(A)\beta,
 \qquad
 G^{(i)}_{ab}=\alpha^{\mathsf T}P_a^{\mathrm{rev}}(A)A_iP_b(A)\beta.
\]
Let $C=\spanop\{A_w\beta\}$ and $O=\spanop\{\alpha^{\mathsf T}A_w\}$.
The frame applied to $(A,\beta)$ makes the columns $c_b=P_b(A)\beta$
span $C$.  Applied to $(A^{\mathsf T},\alpha)$, it makes the rows
$r_a=\alpha^{\mathsf T}P_a^{\mathrm{rev}}(A)$ span $O$.
The annihilator
\[
 K_0=\{x\in C:rx=0\text{ for every }r\in O\}
\]
is invariant under every $A_i$, since $OA_i\subseteq O$.  Thus
$s=\rank G=\dim(C/K_0)$ is the minimal representation dimension:
the quotient is reachable and observable, and the word Hankel matrix
has this rank, bounding below the dimension of any other representation.

If $s=0$, return the zero polynomial, represented by an edgeless source
and sink.  Otherwise choose $I,J$ of size $s$ such that $D=G_{I,J}$
is invertible.  The classes of $c_j$, $j\in J$, form a basis of the
quotient; pairing with $r_i$, $i\in I$, gives its coordinates.  Set
\begin{equation}\label{eq:reconstructed-realization}
 \widetilde A_i=D^{-1}G^{(i)}_{I,J},\qquad
 \widetilde\beta=D^{-1}G_{I,0},\qquad
 \widetilde\alpha^{\mathsf T}=G_{0,J}.
\end{equation}
The index $0$ is essential: $P_0=1$ identifies the original seed and
output row.  These are the quotient operators and boundary vectors, so
$[w]f=\widetilde\alpha^{\mathsf T}\widetilde A_w\widetilde\beta$ for
every word.  Basis selection follows receipt of all the fixed queries.

The polynomial has degree at most $S-1$.  For $|w|\ge S$, every pairing
of $\widetilde A_w$ with an observable row and a reachable column is a
coefficient $[uwv]f=0$.  Reachability and observability of the minimal
representation imply $\widetilde A_w=0$.  Define
\[
 C_j=\sum_{|w|=j}\im\widetilde A_w,\qquad C_0=\field^s.
\]
Then $C_{j+1}\subseteq C_j$, $C_{j+1}=\sum_i\widetilde A_iC_j$, and
$C_S=0$.  Extend a basis from the deepest space upward, refining to a
complete flag on $\field^s$.  Since each operator takes $C_j$
into $C_{j+1}$, its matrix in this basis is strictly upper triangular.
Transform both boundary vectors in the same basis.  Put one internal
vertex at each of its $s$ positions, an edge $a\to b$ with label
$\sum_i(\widetilde A_i)_{ab}x_i$ when $a<b$, and constant edges from
a new source with the output-row entries and to a new sink with the
seed entries.  Its path expansion is the displayed linear representation,
including its empty-word coefficient.  It has $s+2\le S+2$ vertices.
The flag is computed by repeated images and exact basis selection,
without enumerating words.

Each of the $(m+1)Q^2$ queries returns $(2R)^2$ entries.  The matrix
units only copy query entries, and the readout vectors have the original
polynomial bit bound.  The rational ABP path-height argument applies at
order $2R$, so all answers and contractions have polynomial bit length.
Rational Gaussian elimination and inversion of $D$ retain polynomial
bit complexity.  For the flag computation, select unnormalized vectors
that are actual products of at most $S$ reconstructed operators applied
to coordinate vectors.  Their entries have polynomial bit length; the
resulting basis matrix, its inverse, and the transformed ABP coefficients
do as well.  This proves the resource claims.
\end{proof}

Finite-rank realization has classical roots in \cite{CP71}; the
field-linear Hankel quotient and minimal realization used here are
standard \cite[Chapter~2]{BR11}. Weighted-automaton learning uses word-value
and equivalence queries \cite{BBBK}. Klivans and Shpilka develop the
arithmetic-circuit learning connection \cite{KS06}. Forbes and
Shpilka give exact learning of noncommutative ABPs with membership
and equivalence queries, whose oracles can be implemented in randomized
polynomial time or deterministic quasipolynomial time
\cite[Theorem~1.5]{FSPIT}. The contribution here is deterministic
polynomial total resources with all ordinary output matrix queries fixed
before any answer. The same frame as the identity test supplies the data;
neither sparse coefficient-list output nor the intermediate-gate access
of the general-ABP reconstruction in \cite[Theorem~3.5]{AMS} is required.

\section{Rational testing with a supplied matrix base point}\label{app:basepoint}\label{sec:basepoint}

The origin-domain consequence extends to a supplied non-scalar point.
The point must belong to the domain of the given syntax, so that every
inverse gate is defined there.  Finding such a point remains a separate
task.

Realizations about arbitrary matrix centres are developed in
\cite{Volcic,PV21}. The fixed polynomial-resource query below is
relative to the supplied syntactically valid centre.

\begin{theorem}[A supplied valid matrix point]\label{thm:basepoint}
Let $m\ge1$ and work over a characteristic-zero field.  Let $q$ be a scalar-coefficient
noncommutative rational formula with $\kappa(q)\le s$, where
\[
 \kappa(q)=\#\text{constant occurrences}
       +2\#\text{letter occurrences}+\#\text{inverse occurrences}.
\]
Let $b\ge1$ and supply $P\in M_b(\field)^m$ in the syntactic domain of $q$.
From $(m,s,b,P)$ alone one can construct a single query tuple
$X$ of polynomial order that is in that domain and satisfies
$q(X)=0$ if and only if $q$ is a rational identity.
The algorithm uses output-only matrix-evaluation access and exact field
operations.  It does not obtain the formula's internal gates or search
for a domain point.
\end{theorem}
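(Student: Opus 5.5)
The plan is to reduce to the origin case (Corollary~\ref{cor:rational}) by translating the formula to the supplied centre and working with matrix-valued (tensor-amplified) evaluations. Fix $N$ polynomial in $s$ and $b$, to be determined below, and let $T=T_{m,N}$ be the integer tuple of Theorem~\ref{thm:hitting}, of order $R$. The query is
\[
X_i=P_i\otimes I_R+I_b\otimes T_i\qquad(1\le i\le m),
\]
of order $bR$. It is built from $(m,s,b,P)$ alone.

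\emph{Domain membership.} Induct through the syntax of $q$, as in the proof of Corollary~\ref{cor:rational}. Order the $R$ basis vectors along the common level filtration of the $T_i$. Every subexpression evaluated at $X$ then becomes block lower triangular with respect to the $R$ levels, and its diagonal blocks all equal its value at $P$ (in $M_b(\field)$); the strictly lower part is nilpotent. At an inverse gate, the operand's value at $P$ is invertible because $P$ lies in the syntactic domain. Hence the block-triangular matrix is invertible, and its inverse is given by a finite Neumann-type sum. So $X$ lies in the domain. The same induction identifies $q(X)$ with the finite substitution of $T$ into the formal expansion of the translated expression $q(P+y)$, in variables $y$ with coefficients in $M_b(\field)$. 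Here the substitution is $y_i\mapsto I_b\otimes T_i$, and the $M_b$-coefficients act on the first tensor factor.

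\emph{Realization of the translated expansion.} I will invoke the matrix-centre realization theory of \cite{Volcic,PV21}. For $q$ defined at $P$, the generalized series of $q(P+y)$ has a linear representation $[w]=\alpha^{\mathsf T}(\text{coefficient word})\beta$. Its state space has dimension at most $\kappa(q)\,b\le sb$ over $\field$, with $M_b$-linear transitions. After rewriting each generalized monomial (which interleaves $b\times b$ coefficient matrices with the letters) as an ordinary scalar series, the coefficients become $b\times b$ matrices. Each entry of the $b\times b$ matrix is a recognizable series in $y$ of realization dimension at most $N:=sb$ (or a small polynomial multiple). Then I apply Theorem~\ref{thm:series} entrywise. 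Since $q(X)=\sum_w [w]q(P+y)\otimes T_w$, the vanishing of $q(X)$ gives, through the coefficient-dual readout $(I_b\otimes e_a^{\mathsf T})\,q(X)\,(I_b\otimes b')$, the vanishing of every frame response of every entry. Hence the whole translated expansion vanishes. The converse direction is trivial: a rational identity vanishes at every domain point, in particular at $X$.

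\emph{From a zero expansion to an identity, and the main obstacle.} If the expansion about $P$ is zero, I argue as in Corollary~\ref{cor:rational}. Take generic matrices $Z$ of order $nb$ and the curve $P\otimes I_n+tZ$. This curve is in the domain for small $t$, and its Laurent expansion at $t=0$ is obtained from the zero series, so $q$ vanishes on a Zariski-dense subset of the domain at every order $nb$. The domain at all orders being irreducible, $q$ is an identity. The delicate step is the middle one: we need a correct, explicit bound on the realization dimension of the \emph{scalarized} entries of the matrix-centred expansion. In particular the $M_b$-valued coefficients sit between letters rather than only at the ends, so naive scalarization could fail to be recognizable in the plain sense of Appendix~\ref{sec:series}. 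My first attempt is to absorb those coefficients into the transition matrices. I would use the $\kappa(q)b$-dimensional $\field$-linear realization in which each letter $y_i$ acts by a fixed matrix built from $P$ and the formula's linearization. I would also check by the same induction on syntax that translation by $P$ multiplies the bound by at most $b$ (or $b^2$). I would then choose $N$ accordingly, which keeps the order $bR$ polynomial.
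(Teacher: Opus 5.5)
\textbf{There is a genuine gap.} It lies in the step from $q(X)=0$ to ``$q$ is a rational identity'', and it is caused by your choice of query.

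With $X_i=P_i\otimes I_R+I_b\otimes T_i$, the perturbation letters commute with the $M_b(\field)$ coefficients. A generalized monomial $C_0y_{i_1}C_1\cdots y_{i_k}C_k$ of the expansion about $P$ therefore evaluates to $(C_0C_1\cdots C_k)\otimes T_{i_1}\cdots T_{i_k}$. For instance, $y_1Cy_2$ and $Cy_1y_2$ give the same answer. So $q(X)$ only sees the \emph{collapsed} series $q(P\otimes1+1\otimes y)$ in $M_b(\field)\otimes\field\langle\!\langle y\rangle\!\rangle$. Theorem~\ref{thm:series} then shows only that this collapsed series vanishes, i.e.\ that $q$ vanishes on points of the form $P\otimes I_n+I_b\otimes Z'$ with $Z'\in M_n(\field)^m$.

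Your last step instead expands $q(P\otimes I_n+tZ)$ for \emph{generic} $Z\in M_{nb}(\field)^m$. That expansion is the full generalized series, whose interleaved coefficients $C\otimes I_n$ do not commute with $Z$. It is not obtained from the series you showed to be zero. For $b\ge2$, the set $P\otimes I_n+I_b\otimes M_n(\field)^m$ has dimension $mn^2$ inside $M_{nb}(\field)^m$, so it is a proper affine subspace and not Zariski dense. The density argument therefore fails.

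To keep your $m$-letter query, you would need a separate rigidity statement: a rational function regular at a non-scalar $P$ is determined by its expansion along scalar directions alone. That is not a routine consequence of the cited realization theory, and the proposal does not supply it. The obstacle you flagged is also misplaced. After collapsing, each entry is plainly recognizable of dimension at most $sb$, so recognizability is not the issue; faithfulness of the substitution is.

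\textbf{How the paper avoids this.} It gives every matrix-unit direction its own letter. With $g=mb^2$ letters $y_{i,a,c}$, it considers $F(y)=q\bigl(P_i+\sum_{a,c}E_{ac}y_{i,a,c}\bigr)_i$. This is an $M_b$-valued series whose letters commute with the coefficients, so there is no interleaving. Each entry has a scalar realization of order at most $bs$, obtained by flattening the realization over $M_b(\field)$ from \cite{Volcic}. The paper then queries
\[
X_i=P_i\otimes I_R+\sum_{a,c}E_{ac}\otimes T_{i,a,c},\qquad T=T_{g,bs},
\]
so that $q(X)=F(T)$, and Theorem~\ref{thm:series} gives $F=0$.

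Now replace the block letters by $t$ times independent generic $k\times k$ matrices. These parametrize an \emph{arbitrary} perturbation of $P\otimes I_k$ in $M_{bk}(\field)^m$. The Laurent-expansion argument then yields vanishing on a dense part of the domain at every order divisible by $b$. Direct sums of $b$ copies handle the remaining orders, a point your sketch also omits.

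Your domain-membership induction, via the block-triangular nilpotent ideal, carries over unchanged to this corrected query.
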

\begin{proof}
Introduce $g=mb^2$ noncommuting letters $y_{i,a,c}$ and consider, only
in the proof, the matrix-valued formal expansion
\[
 F(y)=q\left(P_i+\sum_{a,c=1}^b E_{ac}y_{i,a,c}\right)_{i=1}^m.
\]
By \cite[Corollary~5.1 and Remark~5.2]{Volcic}, the generalized
expansion of $q$ about $P$ has a realization of dimension at most $s$
over $M_b(\field)$.  Its linear transitions are finite sums of
matrix coefficients multiplying a generalized letter on both sides.
Replacing that letter by $\sum_{a,c}E_{ac}y_{i,a,c}$ makes each
transition linear in the new letters with constant matrix coefficients.
Flattening the $b\times b$ blocks therefore yields a scalar transition
representation of order at most $bs$ for each entry of $F$.  Selecting
an entry changes only its boundary vectors, not this dimension bound.

Let $T=T_{g,bs}$ be the star hitting tuple, and put
\[
 D=(g+2)bs,\qquad R=(g+2)D(D+1)/2,\qquad
 X_i=P_i\otimes I_R+\sum_{a,c=1}^bE_{ac}\otimes T_{i,a,c}.
\]
The actual query order is $bR$.  Its entries are obtained by copying
entries of $P$ and of the fixed integer matrices, with ordinary field
additions, so its construction is polynomial in $m,s,b$ and the
description of $P$.

Let $\mathcal N$ be the nonunital algebra generated by the $T$'s.
Every length-$D$ product is zero, so $\mathcal N^D=0$.  In
$M_b(\field)\otimes(\field I_R+\mathcal N)$, the subspace
$M_b(\field)\otimes\mathcal N$ is a nilpotent two-sided ideal.
Induct through the given formula: every subexpression at $X$ equals
its value at $P$ tensored with $I_R$, plus an element of this ideal.
At an inverse gate its constant matrix is invertible by the domain
promise.  Factoring it out reduces inversion to a finite geometric
sum in the nilpotent ideal.  Thus all inverse gates at $X$ are defined,
and their values agree with finite substitution in $F$.  Hence
$q(X)=F(T)$.  Theorem~\ref{thm:series}, applied to every scalar entry
with realization bound $bs$, implies
\[
 q(X)=0\quad\Longleftrightarrow\quad F=0\text{ as a matrix series}.
\]

To prove that a zero series is a rational identity, fix any $k\ge1$
and replace all new letters by independent generic $k\times k$
matrices multiplied by a central parameter $t$.  The rational entries
of the resulting evaluation have denominators nonzero at $t=0$, since
the constant tuple is $P\otimes I_k$.  Zero formal expansion makes
these rational functions zero, by injectivity of their expansion into
Laurent series in $t$.  The independent block variables parametrize an
arbitrary tuple of order $bk$; affine translation by $P\otimes I_k$
and multiplication by the nonzero indeterminate $t$ preserve that
parametrization.  Thus $q$ vanishes on its entire syntactic domain at
every order divisible by $b$.  If there were a valid nonzero evaluation
at some other order, taking a direct sum of $b$ copies would yield one
at an order divisible by $b$, a contradiction.  Conversely, a rational
identity vanishes at $X$, which we have proved to be in its domain.
\end{proof}

This is a relative exact-arithmetic theorem: it retains the supplied
valid-point promise.  It neither constructs that point nor asserts a
new bit bound for unrestricted shared rational circuits.  General
black-box rational-formula testing is the broader problem considered in
\cite{ACMRIT}.

\section*{AI methodology}
OpenAI ChatGPT and Codex helped develop the theory, including work on the
nilpotent-tag rigidity lemma (Lemma~\ref{lem:rigidity}) and related
intermediate results, and assisted with checking the proofs of most
theoretical results. They also assisted with literature review, drafting
and editing, the Lean formalization of intermediate results, and Lean
verification.
The author takes full responsibility for the content of this paper.



\begin{thebibliography}{Mathlib20}
\setlength{\itemsep}{3pt}
\raggedright
\interlinepenalty=10000

\bibitem[AK04]{AK04}
Daniel Alpay and Dmitry S. Kalyuzhny\u{\i}-Verbovetzki\u{\i}.
On the intersection of null spaces for matrix substitutions in a
non-commutative rational formal power series.
\emph{Comptes Rendus Math\'ematique}, 339(8):533--538, 2004.
\href{https://doi.org/10.1016/j.crma.2004.08.005}{doi:10.1016/j.crma.2004.08.005}.

\bibitem[ACM24]{ACMRIT}
V. Arvind, Abhranil Chatterjee, and Partha Mukhopadhyay.
Black-box identity testing of noncommutative rational formulas in
deterministic quasipolynomial time.
In \emph{Proceedings of the 56th Annual ACM Symposium on Theory of Computing},
pages 106--117, 2024.
\href{https://doi.org/10.1145/3618260.3649693}{doi:10.1145/3618260.3649693}.
Journal version: \emph{SIAM Journal on Computing}, published online
June 1, 2026,
\href{https://doi.org/10.1137/24M1689508}{doi:10.1137/24M1689508}.
Full version: \href{https://arxiv.org/abs/2309.15647}{arXiv:2309.15647}.

\bibitem[AJ26]{AJRIT}
V. Arvind and Pushkar S. Joglekar.
Rational identity testing for noncommutative circuits is in polynomial space.
\emph{arXiv preprint}, arXiv:2609.37319, 2026.
Version 2, October 2, 2026.
\href{https://arxiv.org/abs/2609.37319}{arXiv:2609.37319}.

\bibitem[AJMR19]{AJMR}
Vikraman Arvind, Pushkar S. Joglekar, Partha Mukhopadhyay, and S. Raja.
Randomized polynomial-time identity testing for noncommutative circuits.
\emph{Theory of Computing}, 15(7):1--36, 2019.
\href{https://doi.org/10.4086/toc.2019.v015a007}{doi:10.4086/toc.2019.v015a007}.

\bibitem[AMS10]{AMS}
V. Arvind, Partha Mukhopadhyay, and Srikanth Srinivasan.
New results on noncommutative and commutative polynomial identity testing.
\emph{Computational Complexity}, 19(4):521--558, 2010.
\href{https://doi.org/10.1007/s00037-010-0299-8}{doi:10.1007/s00037-010-0299-8}.
Full version: \href{https://arxiv.org/abs/0801.0514}{arXiv:0801.0514}.

\bibitem[BG19]{BG}
Moulay A. Barkatou and Renat R. Gontsov.
Linear differential systems with small coefficients:
Various types of solvability and their verification.
\emph{Symmetry, Integrability and Geometry: Methods and Applications (SIGMA)},
15:058, 15 pages, 2019.
\href{https://doi.org/10.3842/SIGMA.2019.058}{doi:10.3842/SIGMA.2019.058}.

\bibitem[BBBK00]{BBBK}
Amos Beimel, Francesco Bergadano, Nader H. Bshouty, Eyal Kushilevitz,
and Stefano Varricchio.
Learning functions represented as multiplicity automata.
\emph{Journal of the ACM}, 47(3):506--530, 2000.
\href{https://doi.org/10.1145/337244.337257}{doi:10.1145/337244.337257}.

\bibitem[BR11]{BR11}
Jean Berstel and Christophe Reutenauer.
\emph{Noncommutative Rational Series with Applications}.
Volume 137 of \emph{Encyclopedia of Mathematics and its Applications}.
Cambridge University Press, 2011.
\href{https://doi.org/10.1017/CBO9780511760860}{doi:10.1017/CBO9780511760860}.

\bibitem[BS26]{BS26}
Shubham Bhardwaj and Ramprasad Saptharishi.
Hitting sets for polynomials with small partial derivative spaces.
\emph{Electronic Colloquium on Computational Complexity},
Report TR26-217, September 28, 2026.
\url{https://eccc.weizmann.ac.il/report/2026/217/}.

\bibitem[BW05]{BW}
Andrej Bogdanov and Hoeteck Wee.
More on noncommutative polynomial identity testing.
In \emph{Proceedings of the 20th Annual IEEE Conference on Computational Complexity},
pages 92--99, 2005.
\href{https://doi.org/10.1109/CCC.2005.13}{doi:10.1109/CCC.2005.13}.

\bibitem[CP71]{CP71}
J.~W. Carlyle and A. Paz.
Realizations by stochastic finite automata.
\emph{Journal of Computer and System Sciences}, 5(1):26--40, 1971.
\href{https://doi.org/10.1016/S0022-0000(71)80005-3}{doi:10.1016/S0022-0000(71)80005-3}.

\bibitem[dMU21]{Lean4}
Leonardo de Moura and Sebastian Ullrich.
The Lean 4 theorem prover and programming language.
In \emph{Automated Deduction---CADE 28}, volume 12699 of
\emph{Lecture Notes in Computer Science}, pages 625--635. Springer, 2021.
\href{https://doi.org/10.1007/978-3-030-79876-5_37}{doi:10.1007/978-3-030-79876-5\_37}.

\bibitem[DK02]{DK}
Harm Derksen and Gregor Kemper.
\emph{Computational Invariant Theory}.
Volume 130 of \emph{Encyclopaedia of Mathematical Sciences}.
Springer, first edition, 2002.
\href{https://doi.org/10.1007/978-3-662-04958-7}{doi:10.1007/978-3-662-04958-7}.

\bibitem[DM17]{DMbounds}
Harm Derksen and Visu Makam.
Polynomial degree bounds for matrix semi-invariants.
\emph{Advances in Mathematics}, 310:44--63, 2017.
\href{https://doi.org/10.1016/j.aim.2017.01.018}{doi:10.1016/j.aim.2017.01.018}.
Full version: \href{https://arxiv.org/abs/1512.03393}{arXiv:1512.03393}.

\bibitem[DM20]{DM}
Harm Derksen and Visu Makam.
Algorithms for orbit closure separation for invariants and semi-invariants
of matrices.
\emph{Algebra \& Number Theory}, 14(10):2791--2813, 2020.
\href{https://doi.org/10.2140/ant.2020.14.2791}{doi:10.2140/ant.2020.14.2791}.
Full version: \href{https://arxiv.org/abs/1801.02043}{arXiv:1801.02043}.

\bibitem[FS13a]{FSPIT}
Michael A. Forbes and Amir Shpilka.
Quasipolynomial-time identity testing of non-commutative and
read-once oblivious algebraic branching programs.
In \emph{Proceedings of the 54th Annual IEEE Symposium on Foundations of
Computer Science (FOCS 2013)}, pages 243--252, 2013.
\href{https://doi.org/10.1109/FOCS.2013.34}{doi:10.1109/FOCS.2013.34}.
Full version: \href{https://arxiv.org/abs/1209.2408}{arXiv:1209.2408}.

\bibitem[FS13b]{FSInv}
Michael A. Forbes and Amir Shpilka.
Explicit Noether normalization for simultaneous conjugation via polynomial
identity testing.
In \emph{Approximation, Randomization, and Combinatorial Optimization.
Algorithms and Techniques (APPROX/RANDOM 2013)},
volume 8096 of \emph{Lecture Notes in Computer Science},
pages 527--542. Springer, 2013.
\href{https://doi.org/10.1007/978-3-642-40328-6_37}{doi:10.1007/978-3-642-40328-6\_37}.
Full version: \href{https://arxiv.org/abs/1303.0084}{arXiv:1303.0084}.

\bibitem[GGOW20]{GGOW}
Ankit Garg, Leonid Gurvits, Rafael Oliveira, and Avi Wigderson.
Operator scaling: Theory and applications.
\emph{Foundations of Computational Mathematics}, 20(2):223--290, 2020.
\href{https://doi.org/10.1007/s10208-019-09417-z}{doi:10.1007/s10208-019-09417-z}.

\bibitem[GV15]{GV}
Renat R. Gontsov and Ilya V. Vyugin.
Solvability of linear differential systems with small exponents in the
Liouvillian sense.
\emph{Arnold Mathematical Journal}, 1:445--471, 2015.
\href{https://doi.org/10.1007/s40598-015-0032-4}{doi:10.1007/s40598-015-0032-4}.

\bibitem[HW15]{HW}
Pavel Hrube\v{s} and Avi Wigderson.
Non-commutative arithmetic circuits with division.
\emph{Theory of Computing}, 11(14):357--393, 2015.
\href{https://doi.org/10.4086/toc.2015.v011a014}{doi:10.4086/toc.2015.v011a014}.

\bibitem[IQS18]{IQS}
G\'abor Ivanyos, Youming Qiao, and K. V. Subrahmanyam.
Constructive non-commutative rank computation is in deterministic polynomial time.
\emph{Computational Complexity}, 27(4):561--593, 2018.
\href{https://doi.org/10.1007/s00037-018-0165-7}{doi:10.1007/s00037-018-0165-7}.

\bibitem[KS06]{KS06}
Adam R. Klivans and Amir Shpilka.
Learning restricted models of arithmetic circuits.
\emph{Theory of Computing}, 2(10):185--206, 2006.
\href{https://doi.org/10.4086/toc.2006.v002a010}{doi:10.4086/toc.2006.v002a010}.

\bibitem[LM26]{LM26}
Foram Lakhani and Partha Mukhopadhyay.
Hitting point for sparse noncommutative polynomials.
\emph{Electronic Colloquium on Computational Complexity},
Report TR26-120, 2026. Revision 2, July 20, 2026.
\url{https://eccc.weizmann.ac.il/report/2026/120/}.

\bibitem[LS26]{LS26}
Foram Lakhani and Nitin Saxena.
Matrix identities are hard: Fast blackbox PIT for noncommutative
exponential-size constant-depth homogeneous circuits.
\emph{Electronic Colloquium on Computational Complexity},
Report TR26-173, September 10, 2026.
\url{https://eccc.weizmann.ac.il/report/2026/173/}.

\bibitem[Mathlib20]{Mathlib}
The mathlib Community.
The Lean mathematical library.
In \emph{Proceedings of the 9th ACM SIGPLAN International Conference
on Certified Programs and Proofs (CPP 2020)}, pages 367--381. ACM, 2020.
\href{https://doi.org/10.1145/3372885.3373824}{doi:10.1145/3372885.3373824}.

\bibitem[Mul17]{Mulmuley}
Ketan D. Mulmuley.
Geometric complexity theory V: Efficient algorithms for Noether normalization.
\emph{Journal of the American Mathematical Society}, 30(1):225--309, 2017.
\href{https://doi.org/10.1090/jams/864}{doi:10.1090/jams/864}.
Full version: \href{https://arxiv.org/abs/1209.5993}{arXiv:1209.5993}.

\bibitem[Nis91]{Nisan91}
Noam Nisan.
Lower bounds for non-commutative computation.
In \emph{Proceedings of the Twenty-Third Annual ACM Symposium on
Theory of Computing}, pages 410--418, 1991.
\href{https://doi.org/10.1145/103418.103462}{doi:10.1145/103418.103462}.

\bibitem[PV21]{PV21}
Motke Porat and Victor Vinnikov.
Realizations of non-commutative rational functions around a matrix centre,
I: Synthesis, minimal realizations and evaluation on stably finite algebras.
\emph{Journal of the London Mathematical Society}, 104(3):1250--1299, 2021.
\href{https://doi.org/10.1112/jlms.12459}{doi:10.1112/jlms.12459}.
Full version: \href{https://arxiv.org/abs/1905.11304}{arXiv:1905.11304}.

\bibitem[Pro76]{Procesi}
Claudio Procesi.
The invariant theory of $n\times n$ matrices.
\emph{Advances in Mathematics}, 19(3):306--381, 1976.
\href{https://doi.org/10.1016/0001-8708(76)90027-X}{doi:10.1016/0001-8708(76)90027-X}.

\bibitem[RS05]{RS}
Ran Raz and Amir Shpilka.
Deterministic polynomial identity testing in non-commutative models.
\emph{Computational Complexity}, 14(1):1--19, 2005.
\href{https://doi.org/10.1007/s00037-005-0188-8}{doi:10.1007/s00037-005-0188-8}.

\bibitem[Raz74]{Razmyslov}
Yu.~P. Razmyslov.
Trace identities of full matrix algebras over a field of characteristic zero.
\emph{Mathematics of the USSR-Izvestiya}, 8(4):727--760, 1974.
\href{https://doi.org/10.1070/IM1974v008n04ABEH002126}{doi:10.1070/IM1974v008n04ABEH002126}.

\bibitem[Sch61]{Schutzenberger61}
M.~P. Sch\"utzenberger.
On the definition of a family of automata.
\emph{Information and Control}, 4(2--3):245--270, 1961.
\href{https://doi.org/10.1016/S0019-9958(61)80020-X}{doi:10.1016/S0019-9958(61)80020-X}.

\bibitem[Vol18]{Volcic}
Jurij Vol\v{c}i\v{c}.
Matrix coefficient realization theory of noncommutative rational functions.
\emph{Journal of Algebra}, 499:397--437, 2018.
\href{https://doi.org/10.1016/j.jalgebra.2017.12.009}{doi:10.1016/j.jalgebra.2017.12.009}.
Full version: \href{https://arxiv.org/abs/1505.07472}{arXiv:1505.07472}.

\end{thebibliography}
\end{document}